\documentclass[a4paper, 11pt, reqno]{amsart}
\usepackage{amsmath,amsfonts,amssymb,amsthm,enumerate}
\usepackage{graphicx}
\usepackage{xy} \xyoption{all}

\setlength{\textheight}{21.2cm} \setlength{\textwidth}{13.5cm}
\setlength{\topmargin}{0cm} \setlength{\oddsidemargin}{1.5cm}

\newtheorem{thm}{Theorem}[section]
\newtheorem{cor}[thm]{Corollary}
\newtheorem{prop}[thm]{Proposition}
\newtheorem{lem}[thm]{Lemma}

\theoremstyle{definition}
\newtheorem{defn}[thm]{Definition}
\newtheorem{exas}[thm]{Example}
\newtheorem{rem}[thm]{Remark}

\let\phi\varphi

\pagestyle{plain}
\begin{document}
\title{On Leavitt path algebras of Hopf graphs}
\maketitle
\begin{center}
	T.\,G.~Nam\footnote{Institute of Mathematics, VAST, 18 Hoang Quoc Viet, Cau Giay, Hanoi, Vietnam. E-mail address: \texttt{tgnam@math.ac.vn}} and N.\,T.~Phuc\footnote{Faculty of Mathematics - Informatics Teacher Education, Dong Thap University, Vietnam. E-mail address: \texttt{ntphuc@dthu.edu.vn} 
		
		\ \ {\bf Acknowledgements}:   
		The authors were supported by the Vietnam National Foundation for Science and Technology Development (NAFOSTED) under Grant 101.04-2020.01. We also take an opportunity to express their deep gratitude to the anonymous referee for extremely careful reading, highly professional working with our manuscript, and quite valuable suggestions.}
\end{center}
 
\begin{abstract} In this paper, we provide the structure of   Hopf graphs associated to pairs $(G, \mathfrak{r})$ consisting of groups $G$ together with ramification datas $\mathfrak{r}$ and their Leavitt path algebras. Consequently, we characterize the Gelfand-Kirillov dimension, the stable rank, the purely infinite simplicity and the existence of a nonzero finite dimensional representation  of  the Leavitt path algebra of a Hopf graph via properties of ramification data $\mathfrak{r}$ and $G$.

\medskip

\textbf{Mathematics Subject Classifications 2020}: 16S88, 16S99, 05C25

\textbf{Key words}: Hopf graph; Purely infinite simple; Finite dimensional representation; Gelfand-Kirillov dimension; Leavitt path algebra.
\end{abstract}

\section{Introduction}

Given a (row-finite) directed graph $E$ and a field $K$, Abrams and Aranda Pino in
\cite{ap:tlpaoag05}, and independently Ara, Moreno, and Pardo in \cite{amp:nktfga},
introduced the \emph{Leavitt path algebra} $L_K(E)$. Abrams and Aranda Pino later
extended the definition in \cite{ap:tlpaoag08} to all countable directed graphs. Goodearl in \cite{g:lpaadl09} extended the notion of Leavitt path algebras to all (possibly uncountable) directed graphs $E$. In \cite{tomf:lpawciacr}, Tomforde generalized the construction of Leavitt path algebras by replacing the field with a commutative ring. Katsov, Nam and Zumbragel in \cite{knz:solpawcicr} considered the concept of Leavitt path algebras with coefficients in a commutative semiring. Leavitt path algebras generalize the Leavitt algebras $L_K(1, n)$ of \cite{leav:tmtoar}, and also contain many other interesting classes of algebras. In addition, Leavitt path algebras are intimately related to graph $C^*$-algebras (see~\cite{r:ga}). During the past seventeen years, Leavitt path algebras have become a topic of intense investigation by mathematicians from across the mathematical spectrum. For a detailed history and overview of Leavitt path algebras we refer the reader to the survey article~\cite{a:firstde}.

Cibils and Rosso \cite{Hopf} have introduced the notion of the Hopf graph $\Gamma_{G, \mathfrak{r}}$ of a group $G$ with a ramification data which is a function from $\mathcal{C}$ to $\mathbb{N}$, denoted by $\mathfrak{r}= \sum\limits_{C\in \mathcal{C}} \mathfrak{r}_CC$, where $\mathcal{C}$ is the set of conjugacy classes of $G$. They then classified all graded Hopf algebras structures over path coalgebras using Hopf graphs. It turns out that the path coalgebra $KE$ of a graph $E$ over a field $K$ admits a structure of a Hopf algebra if and only if $E$ is a Hopf graph. We should note that Hopf graphs are similar to Cayley graphs, which have the set of vertices given by the elements of a group and arrows corresponding to multiplication by elements of a chosen system of generators.  In recent years, there have been several works around Leavitt path algebras of Cayley graphs of finite groups and in particular computing their Grothendieck group $K_0$ (see, e.g., \cite{ap:tlpaogcg, aeg:lpaocg, as:lpaocgafcg, moh:lpaowcg}) and regarding their simplicity and Invariant Basis Number property (see, e.g., \cite{moh:lpaowcg, np:tsolpaatibnp}). 

Motivated by the above results, in this article we investigate Leavitt path algebras of Hopf graphs. More namely, we describe  connected and strongly connected components of Hopf graphs $\Gamma_{G, \mathfrak{r}}$, and obtain that the Leavitt path algebra $L_K(\Gamma_{G, \mathfrak{r}})$ is isomorphic to the direct sum of 
$|G/\Delta^0_{G, \mathfrak{r}}|$-copies of the Leavitt path algebra $L_K(\Delta_{G, \mathfrak{r}})$, where $\Delta^0_{G, \mathfrak{r}}$ is the subgroup of $G$ generated by the set $\{c \in C\mid C\in \mathcal{C}, \mathfrak{r}_C >0\}$ and $\Delta_{G, \mathfrak{r}}$ is the subgraph of $\Gamma_{G, \mathfrak{r}}$ having set of vertices $\Delta^0_{G, \mathfrak{r}}$ (Theorem \ref{notconnected}). 
Consequently, we characterize the Gelfand-Kirillov dimension (Theorem \ref{GKdim}),  the purely infinite simplicity (Theorem \ref{purely}), the stable rank (Theorem \ref{stablerank}), as well as classify all finite dimensional representations (Theorem \ref{fdm}) of the Leavitt path algebra $L_K(\Gamma_{G, \mathfrak{r}})$ in terms of both ramification data $\mathfrak{r}$ and the subsemigroup $S_{G, \mathfrak{r}}$ of $G$ generated by the set $\{c \in C\mid C\in \mathcal{C}, \mathfrak{r}_C >0\}$. In particular, we extend the criteria for the simplicity and Invariant Basis Number property of Leavitt path algebras of Cayley graphs, introduced in \cite{moh:lpaowcg, np:tsolpaatibnp},
to Hopf graphs. We should mention that graph-theoretic characterizations on graphs $E$ of these properties for Leavitt path algebras $L_K(E)$ have been established in literatures (see, e.g, \cite{mm:gaatgkd}, \cite{aam:lpa}, \cite{AP:srolpa}, and \cite{ko:rolpa}, respectively); while based on these criteria, our characterizations are established completely on properties of both ramification data  $\mathfrak{r}$ and $G$.

Throughout this article, the
set of nonnegative integers is denoted by $\mathbb{N}$, the integers by $\mathbb{Z}$.


%

\section{The structure of Hopf graph Leavitt path algebras}

The main aim of this section is to provide fundamental properties of Hopf graphs (Proposition \ref{common}), the structure of Hopf graph and their Leavitt path algebras (Theorem \ref{notconnected}) via properties of ramification data.
\medskip

We begin this section by recalling some general notions of graph theory.

A (directed) graph $E = (E^0, E^1, r, s)$ (or shortly $E = (E^0, E^1)$)
consists of two disjoint sets $E^0$ and $E^1$, called \emph{vertices} and \emph{edges}
respectively, together with two maps $r, s: E^1 \longrightarrow E^0$.  The
vertices $r(e)$ and $s(e)$ are referred to as the \emph{range} and the  \emph{source} of the edge~$e$, respectively. The graph is called \emph{row-finite} if
$|s^{-1}(v)|< \infty$ for all $v\in E^0$. A graph $E$ is \emph{finite} if both sets $E^0$ and $E^1$ are finite. We say that $E$ is a \emph{trivial graph} if $E$ has only one vertex and no edges. A vertex~$v$ for which $s^{-1}(v)$ is empty is called a \emph{sink}; a vertex~$v$ for which
$r^{-1}(v)$ is empty is called a \emph{source}; a vertex~$v$ is called an \emph{isolated vertex}
if it is both a source and a sink; and a vertex~$v$ is \emph{regular} if $0 < |s^{-1}(v)| < \infty$. 

A \emph{finite path} $p = e_{1} \cdots e_{n}$ in a graph $E$ is a sequence of
edges $e_{1}, \cdots, e_{n}$ such that $r(e_{i}) = s(e_{i+1})$ for $i
= 1, \cdots, n-1$.  In this case, we say that the path~$p$ starts at
the vertex $s(p) := s(e_{1})$ and ends at the vertex $r(p) :=
r(e_{n})$, and has \emph{length} $|p| := n$. We consider the vertices in $E^0$ to be paths of length $0$. We denote by $\text{Path}(E)$ the set of all finite paths of $E$.
We denote by $p^0$
the set of its vertices, that is, $p^0 = \{s(e_i), r(e_i)\ |\ i = 1,\cdots ,n\}$.
A finite path $p$ of positive length is \textit{closed} if $s(p) = r(p)$, in which case $p$
is said to be \textit{based at the vertex} $s(p)$. The closed path $p$ is called a \emph{cycle} if $p$ does not pass through any of its vertices twice. A cycle $c$ is called a \emph{single cycle} if  $|r^{-1}(v)|=|s^{-1}(v)| =1$ for all $v\in c^0$. A graph $E$ is \emph{acyclic} if it has no cycles.
An edge~$f$ is an \emph{exit} for a path $p = e_{1} \cdots e_{n}$ if $s(f) = s(e_{i})$
but $f \ne e_{i}$ for some $1 \le i \le n$. For vertices $v, w\in E^0$, we write $v\geq w$ if there
exists a path in $E$ from $v$ to $w$, i.e., a finite path $p$ with $s(p) = v$ and $r(p) =w$. By an \emph{infinite path} in $E$ we mean a
sequence $q = e_{1} \cdots e_{i}\cdots$ for which $r(e_{i}) = s(e_{i+1})$ for all positive integer $i$. We denote by $E^{\infty}$ the set of all infinite paths of $E$, and by $E^{\geq \infty}$ the set $E^{\infty}$ together with the elements of $\text{Path}(E)$ whose range vertex is a singular vertex.

Let $E = (E^0, E^1, r, s)$ be a graph. We define the {\it extended graph} of $E$ as the new graph $\widehat{E}=(E^0, E^1 \cup (E^1)^*, \widehat{r}, \widehat{s})$, where $(E^1)^*=\{e^*\mid e\in E^1\}$, and the functions $\widehat{r}$ and $\widehat{s}$ are defined as
\[\widehat{r}(e)=r(e),~\widehat{s}(e)=s(e),~\widehat{r}(e^*)=s(e) \text{ and }\widehat{s}(e^*)=r(e)\text{ for any }e\in E^1.\]
We sometimes refer to the edges in the graph $E$ as {\it real edges}, the path in the graph $E$ as {\it real path} and the additional edges in $\widehat{E}$ (i.e. the elements of $(E^1)^*$) as {\it ghost edges}. 

We say that a graph $E$ is  \emph{connected} if given any two vertices $u, v \in E^0$ there exists
a path $h$ in $\widehat{E}$ for which $\widehat{s}(h)=u$ and $\widehat{r}(h)=v$. The \emph{connected components} of a graph $E$ are the graphs
$\{E_i\}_{i\in \Lambda}$ such that $E$ is the disjoint union $E=\sqcup_{i\in \Lambda}E_i$, where every $E_i$ is connected.

A graph $E$ is  called  \emph{strongly connected} if given any pair of vertices $(u, v) \in E^0\times E^0$ there exists
a path $h$  in $E$ such that $s(h)=u$ and $r(h)=v$. We denote by $\mathcal{S}_E$ the set of all strongly connected components of $E$. It is obvious that $\mathcal{S}_E$ is partially ordered by  the relation $\geq$ defined by: for all $\Pi_1, \Pi_2\in \mathcal{S}_E$, $\Pi_1 \geq \Pi_2$ if $v\geq w$ for some $v\in \Pi_1^0$ and $w\in \Pi_2^0$. 

For an arbitrary graph $E = (E^0,E^1,s,r)$ and an arbitrary field $K$, the \emph{Leavitt path algebra} $L_{K}(E)$ of the graph~$E$
\emph{with coefficients in} $K$ is the $K$-algebra generated
by the sets $E^0$ and $E^1 \cup (E^1)^*$, satisfying the following relations for all $v, w\in E^0$ and $e, f\in E^1$:
\begin{itemize}
		\item[(1)] $v w = \delta_{v, w} w$; 
		\item[(2)] $s(e) e = e = e r(e)$ and
		$r(e) e^* = e^* = e^*s(e)$;
		\item[(3)] $e^* f = \delta_{e, f} r(e)$;
		\item[(4)] $v= \sum_{e\in s^{-1}(v)}ee^*$ for any  regular vertex $v$;
\end{itemize}
where $\delta$ is the Kronecker delta.
\medskip


It is worth mentioning the following simple fact.

\begin{rem}[{cf. \cite[Proposition 1.4]{an:colpaofgalpa} and \cite[Proposition 1.2.14]{aam:lpa}}] \label{directsum}
Let $K$ be a field and $E$ a graph with its connected components $\{E_i\}_{i\in \Lambda}$. Then $L_K(E)\cong \bigoplus_{i\in \Lambda} L_K(E_i)$.
\end{rem}

We next recall the notion of Hopf graphs introduced by Cibils and Rosso in \cite{Hopf}. Let $G$ be an arbitrary group and $\mathcal{C}$ the set of all conjugacy classes of $G$. We call a \emph{ramification data} of $G$ is a function $\mathfrak{r}: \mathcal{C}\longrightarrow \mathbb{N}$, denoted by $\mathfrak{r}= \sum\limits_{C\in \mathcal{C}} \mathfrak{r}_CC$ (where $\mathfrak{r}_C := \mathfrak{r}(C)$).  The {\it support} of $\mathfrak{r}$ is the set

$$\rm{supp}(\mathfrak{r})=\{C\in \mathcal{C}\mid \mathfrak{r}_C>0\}.$$ We say that $\mathfrak{r}$ has \emph{finite support} if $\bigcup_{C\in \rm{supp}(\mathfrak{r})}C$ is a finite set. We denote by $S_{G, \mathfrak{r}}$ the \emph{subsemigroup} of $G$ generated by $\bigcup_{C\in \rm{supp}(\mathfrak{r})}C$. 

\begin{defn}[{\cite[Definition 3.1]{Hopf}}]\label{Hopf}
Let $G$ be an arbitrary group with a ramification data $\mathfrak{r}= \sum\limits_{C\in \mathcal{C}} \mathfrak{r}_CC$. The {\it Hopf graph associated to the pair} $(G,\mathfrak{r})$, denoted by $\Gamma_{G, \mathfrak{r}}$, has set of vertices $\Gamma_{G, \mathfrak{r}}^0=G$  and has $\mathfrak{r}_C$ edges from $x$ to $xc$	for each $x \in  G$ and $c\in C$.
\end{defn}

It is worth mentioning that in \cite[Theorem 3.3]{Hopf}  Cibils and Rosso showed that Hopf graphs are precisely the graphs such that the path algebra can be endowed with a graded Hopf algebra structure.\medskip

For clarification, we illustrate the notion of Hopf graphs by presenting the following examples.
\begin{exas}\label{2.5}
	Consider the trivial group $G=\{1_G\}$ and the ramification data $\mathfrak{r}=n[1_G]$. Then $\Gamma_{\mathbb{Z}, \mathfrak{r}}$ is the rose with $n$ petals graph:
	\begin{center}
		$$\xymatrix{R_n=&\bullet_{1_G} \ar@{..}@(d,r) \ar@(r,u) \ar@(u,l) \ar@(rd,ru) \ar@(lu,ld) \ar@(ur,ul) \ar@{..}@(l,d) \ar@{..}@(ld,rd)}$$ 
	\end{center}
	\medskip
	
\end{exas}
\begin{exas}\label{2.3}
Let $G = S_3$ be the symmetric group of order $6$, and write $$G=\{id, (12), (13), (23), (123), (132)\}.$$ We then have $\mathcal{C}=\{C_1:=[id], C_2:=[(12)], C_3:=[(123)]\}$, where $$C_1=\{id\}, C_2=\{(12), (13), (23)\}, C_3=\{(123), (132)\}.$$
	
(1) Consider the ramification data $\mathfrak{r}=C_3$. We then have that $\Gamma_{G, \mathfrak{r}}$ is the following graph:
\begin{center}
	$\xymatrix{& \bullet_{id} \ar@/^1pc/[rd] \ar@/_1pc/[ld]& \\
	\bullet_{(123)} \ar@/_1pc/[ru] \ar@/_1pc/[rr]& & \bullet_{(132)} \ar@/^1pc/[lu] \ar@/_1pc/[ll]}$   
	$\xymatrix{& \bullet_{(12)} \ar@/^1pc/[rd] \ar@/_1pc/[ld]& \\
		\bullet_{(13)} \ar@/_1pc/[ru] \ar@/_1pc/[rr]& & \bullet_{(23)} \ar@/^1pc/[lu] \ar@/_1pc/[ll]}$ 
\end{center}
	\medskip
	
(2) Consider the ramification data $\mathfrak{r}=C_2$. Then $\Gamma_{G, \mathfrak{r}}$ is the following graph:
	$$\xymatrix{\bullet_{(123)} \ar@/_1pc/[d] \ar@/_1pc/[rd] \ar@/_1pc/[rrd]& & \bullet_{(132)} \ar@/_1pc/[d] \ar@/_1pc/[ld] \ar@/_1pc/[lld]\\
	\bullet_{(12)} \ar@/_1pc/[u] \ar@/_1pc/[urr] \ar@/_1pc/[rd] & \bullet_{(13)} \ar@/_1pc/[d] \ar@/_1pc/[ul] \ar@/_1pc/[ru]& \bullet_{(23)} \ar@/_1pc/[u] \ar@/_1pc/[ull] \ar@/_1pc/[ld]\\
    & \bullet_{id} \ar@/_1pc/[u] \ar@/_1pc/[ul] \ar@/_1pc/[ru]& }$$
\end{exas}

\begin{exas}\label{2.4}
Consider the group of all integers $\mathbb{Z}$. We then have $\mathcal{C}=\{[n]\mid n\in \mathbb{Z}\}$, where $[n]=\{n\}$ for all $n\in \mathbb{Z}$.
	
	(1) Consider the ramification data $\mathfrak{r}=[0]+[2]$. Then $\Gamma_{\mathbb{Z}, \mathfrak{r}}$ is the following graph:
	\begin{center}
	$$\xymatrix{... \ar[r]&-4 \ar@(ul,ur) \ar[r]&-2\ar@(ul,ur) \ar[r]&0\ar@(ul,ur) \ar[r]&2\ar@(ul,ur) \ar[r]&4\ar@(ul,ur) \ar[r]&...\\
	... \ar[r]&-3\ar@(ul,ur) \ar[r]&-1\ar@(ul,ur) \ar[r]&1\ar@(ul,ur) \ar[r]&3\ar@(ul,ur) \ar[r]&5\ar@(ul,ur) \ar[r]&...}$$ 
	\end{center}
	\medskip
	
	(2) Consider the ramification data $\mathfrak{r}=[2]+[3]$. Then $\Gamma_{\mathbb{Z}, \mathfrak{r}}$ is the following graph:
	$$\xymatrix{... \ar[dr]\ar[r]&-4 \ar[dr] \ar[r]&-2 \ar[dr]\ar[r]&0 \ar[dr]\ar[r]&2 \ar[dr]\ar[r]&4 \ar[dr]\ar[r]&...\\
	            ... \ar[urr]\ar[r]&-3 \ar[urr]\ar[r]&-1 \ar[urr]\ar[r]&1 \ar[urr]\ar[r]&3 \ar[urr]\ar[r]&5 \ar[r]&...}$$
	\medskip
\end{exas}

The following proposition provides us with fundamental properties of Hopf graphs, which are useful to prove the main results of this article.

\begin{prop}\label{common}
Let $G$ be an arbitrary group with  a ramification data $\mathfrak{r}=\sum_{C\in \mathcal{C}}\mathfrak{r}_CC$. Then the following statements hold:
	
$(1)$ There exists a path in $\widehat{\Gamma_{G, \mathfrak{r}}}$ $(\Gamma_{G, \mathfrak{r}}$, respectively$)$ from a vertex $g$ to a vertex $h$ if and only if  $h=gw$ for some $w\in \langle S_{G, \mathfrak{r}} \rangle\ (w\in S_{G, \mathfrak{r}}$, respectively$)$, where $\langle S_{G, \mathfrak{r}} \rangle$ is the subgroup of $G$ generated by $S_{G, \mathfrak{r}}$;
	
$(2)$ $|s^{-1}(g)|=|r^{-1}(g)|$ for all $g\in G$;
	
$(3)$ $\Gamma_{G, \mathfrak{r}}$ is a row-finite graph if and only if $\mathfrak{r}$ has finite support;
	
$(4)$ If  $\mathfrak{r}= 0$, then $\Gamma_{G, \mathfrak{r}}$ is a disjoint union of isolated vertices;
	
$(5)$ If  $\mathfrak{r}\neq 0$, then $\Gamma_{G, \mathfrak{r}}$ has neither sinks nor sources;
	
$(6)$ $\Gamma_{G, \mathfrak{r}}$ has a cycle if and only if $S_{G, \mathfrak{r}}$ is a submonoid of $G$;

$(7)$ If $G$ is a finite group, then $g$ is the base of at least $\sum_{C\in \mathcal{C}}\mathfrak{r}_C|C|$ cycles for all $g\in G$.

\end{prop}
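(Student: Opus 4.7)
The plan is to work directly from Definition~\ref{Hopf}: the edges of $\Gamma_{G,\mathfrak{r}}$ with source a given vertex $x$ are naturally indexed by pairs $(c,i)$ with $c\in\bigcup_{C\in\mathrm{supp}(\mathfrak{r})}C$ and $1\le i\le \mathfrak{r}_C$, such an edge having range $xc$; in the extended graph $\widehat{\Gamma_{G,\mathfrak{r}}}$ the ghost edges add transitions from $y$ to $yc^{-1}$. Concatenating edges in $\Gamma_{G,\mathfrak{r}}$ therefore realises right-multiplication by an element of $S_{G,\mathfrak{r}}$, while concatenating in $\widehat{\Gamma_{G,\mathfrak{r}}}$ realises right-multiplication by an element of the subgroup $\langle S_{G,\mathfrak{r}}\rangle$. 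This yields (1) in both versions: a path from $g$ to $h$ produces a decomposition witnessing $h=gw$, and conversely any decomposition $w=c_1\cdots c_n$ assembles into a path by traversing, at step $j$, any of the $\mathfrak{r}_{C_j}$ available edges from the current vertex.

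Statements (2)--(5) are then a direct count. For every $g\in G$, both $|s^{-1}(g)|$ and $|r^{-1}(g)|$ equal $\sum_{C}\mathfrak{r}_C|C|$ (edges into $g$ correspond to choices of $c\in\bigcup_C C$ and edge-label $i$, with source $gc^{-1}$), proving (2); row-finiteness of $\Gamma_{G,\mathfrak{r}}$ is therefore equivalent to $\sum_{C\in\mathrm{supp}(\mathfrak{r})}|C|<\infty$, which is exactly the definition of finite support, giving (3). Clauses (4) and (5) are the two cases of the same count: $\mathfrak{r}=0$ forces $s^{-1}(g)=r^{-1}(g)=\emptyset$ for every $g$, while $\mathfrak{r}\neq 0$ forces both to be non-empty.

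For (6), combining (1) with the definition of a cycle shows that $\Gamma_{G,\mathfrak{r}}$ admits a closed path iff $1_G\in S_{G,\mathfrak{r}}$, i.e. iff $S_{G,\mathfrak{r}}$ is a submonoid of $G$. I expect the main obstacle here to be promoting such a closed path to an actual cycle (one without repeated intermediate vertices). The plan is to pick a decomposition $1_G=c_1c_2\cdots c_k$ of minimal possible length $k\ge 1$ with each $c_j\in\bigcup_{C\in\mathrm{supp}(\mathfrak{r})}C$, and then observe that any coincidence $gc_1\cdots c_i=gc_1\cdots c_j$ with $0\le i<j\le k$ and $(i,j)\neq(0,k)$ would yield $c_{i+1}\cdots c_j=1_G$, a product of length strictly less than $k$, contradicting minimality; hence the intermediate partial products are pairwise distinct and the associated closed path is a cycle.

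Finally for (7), fix $g\in G$. For each $c\in\bigcup_{C\in\mathrm{supp}(\mathfrak{r})}C$, finiteness of $G$ gives $c$ a finite order $n_c$, so $g,gc,gc^2,\ldots,gc^{n_c-1}$ are pairwise distinct with $gc^{n_c}=g$. Fixing, for each $k\ge 2$, one edge from $gc^{k-1}$ to $gc^k$ and letting only the first edge vary over the $\mathfrak{r}_C$ parallel edges from $g$ to $gc$ produces $\mathfrak{r}_C$ cycles based at $g$; since cycles with distinct first edges are distinct as edge sequences, and distinct elements $c$ give distinct targets $gc$ hence distinct first edges, summing over all $c\in\bigcup_{C\in\mathrm{supp}(\mathfrak{r})}C$ yields at least $\sum_{C}\mathfrak{r}_C|C|$ pairwise distinct cycles based at $g$.
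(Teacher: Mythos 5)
Your proposal is correct and follows essentially the same route as the paper's proof for all seven items: path-concatenation as right multiplication for (1), the count $|s^{-1}(g)|=|r^{-1}(g)|=\sum_{C}\mathfrak{r}_C|C|$ for (2)--(5), the identity $1_G=c_1\cdots c_n$ for (6), and the orders of the generators for (7). Where you diverge is in being strictly more careful at the two points the paper glosses over. In the converse direction of (6), the paper builds a closed path from an arbitrary decomposition $1_G=c_1\cdots c_n$ and simply declares it a cycle, without checking that the intermediate vertices $g, gc_1,\ldots,gc_1\cdots c_{n-1}$ are pairwise distinct (which the paper's own definition of cycle requires); your device of choosing a decomposition of \emph{minimal} length $k$ and observing that any coincidence of partial products would produce a shorter decomposition closes that gap cleanly. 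Likewise in (7), the paper's justification is little more than an assertion, whereas you actually exhibit the $\sum_C\mathfrak{r}_C|C|$ distinct cycles by varying the first edge over the $\mathfrak{r}_C$ parallel edges toward each of the $|C|$ distinct targets $gc$. So the two arguments buy the same results, but yours is the one that would survive a referee's request to verify that the constructed closed paths are genuinely cycles.
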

\begin{proof}
(1) Let $(\bigcup_{C\in \rm{supp}(\mathfrak{r})}C)^{-1}:=\{c^{-1}\mid c\in \bigcup_{C\in \rm{supp}(\mathfrak{r})}C\}$. There exists a path $p=e_1e_2\cdots e_n$ in $\widehat{\Gamma_{G, \mathfrak{r}}}$ from $g$ to $h$ if and only if $g=\widehat{s}(e_1)$, $h=\widehat{r}(e_n)$, and for any $1\le i\le n$, if $e_i$ is a real edge, then $\widehat{r}(e_i)=\widehat{s}(e_i)c_i$ for some $c_i\in \bigcup_{C\in \rm{supp}(\mathfrak{r})}C$, else $\widehat{r}(e_i)=\widehat{s}(e_i)c_i$ for some $c_i\in (\bigcup_{C\in \rm{supp}(\mathfrak{r})}C)^{-1}$. We then have $h=\widehat{r}(e_n)=\widehat{s}(e_1)c_1\cdots c_n=gc_1\cdots c_n$, where $c_i$'s are in $\bigcup_{C\in \rm{supp}(\mathfrak{r})}C\cup (\bigcup_{C\in \rm{supp}(\mathfrak{r})}C)^{-1}$. Let $w:= c_1\cdots c_n$. We then have $h=gw$ and $w\in \langle S_{G, \mathfrak{r}} \rangle$. If $p$ is a real path, then $c_i\in \bigcup_{C\in \rm{supp}(\mathfrak{r})}C$ for all $1\leq i \leq n$, and so $w\in S_{G, \mathfrak{r}}$, proving (1).
	
(2) It is obvious that $|s^{-1}(g)|=|r^{-1}(g)|=\sum_{C\in \mathcal{C}}\mathfrak{r}_C|C|$ for all $g\in G$.
	
(3) By item (2), $|s^{-1}(g)|=\sum_{C\in \mathcal{C}}\mathfrak{r}_C|C|$ for all $g\in G$. Since $\mathfrak{r}_C$ is a nonnegative integer for all $C\in \mathcal{C}$, we must have $|s^{-1}(g)|<\infty$ if and only if $\sum_{C\in \mathcal{C}}|C|<\infty$. Equivalently,  $\Gamma_{G, \mathfrak{r}}$ is row-finite if and only if $\mathfrak{r}$ has finite support.
	
(4) If  $\mathfrak{r}= 0$, then $|s^{-1}(g)|=|r^{-1}(g)|=0$ for all $g\in G$, and so, $\Gamma_{G, \mathfrak{r}}$ is a disjoint union of isolated vertices.
	
(5) If  $\mathfrak{r}\neq 0$, then $|s^{-1}(g)|=|r^{-1}(g)|\neq 0$ for all $g\in G$, and so, $\Gamma_{G, \mathfrak{r}}$ has neither sinks nor sources.
	
(6) $(\Longrightarrow)$. Assume that $\Gamma_{G, \mathfrak{r}}$ has a cycle $\alpha=e_1e_2\cdots e_n$. Then, there exist elements $\{c_i\}_{i=1}^{n}\subseteq \bigcup_{C\in \rm{supp}(\mathfrak{r})}C$ such that $r(e_i)=s(e_i)c_i$ for all $i=1,\cdots ,n$. Since $\alpha$ is a cycle, we have $s(e_1)=r(e_n)=s(e_1)c_1c_2\cdots c_n$, and so, $1_G=c_1\cdots c_n\in S_{G, \mathfrak{r}}$. This shows that $S_{G, \mathfrak{r}}$ is a submonoid of $G$.
	
$(\Longleftarrow)$. Assume that $S_{G, \mathfrak{r}}$ is a submonoid of $G$. This implies that there exist elements $\{c_i\}_{i=1}^{n}\subseteq \bigcup_{C\in \rm{supp}(\mathfrak{r})}C$ such that $c_1\cdots c_n=1_G$. Let $g$ be an arbitrary element of $G$. We then have that in $\Gamma_{G, \mathfrak{r}}$ there exist edges $\{e_i\}^n_{i=1}$ such that $s(e_1) = g$, $s(e_i) = gc_1\cdots c_{i-1}$ for all $i\ge 2$, and $r(e_i) = gc_1\cdots c_i$ for all $1\le i\le n$. Since $c_1\cdots c_n=1_G$, we obtain that $s(e_1) = r(e_n)$, and so, $\Gamma_{G, \mathfrak{r}}$ has a cycle $e_1 \cdots e_n$.	
	
(7) Let $g\in G$ and $c\in \bigcup_{C\in \rm{supp}(\mathfrak{r})}C$. Since $G$ is a finite group, there exists a positive integer $n$ such that $c^n=1_G$. By item (6), there exists a cycle $\alpha$ in $\Gamma_{G, \mathfrak{r}}$ based at $g$. This implies that $g$ is the base of at least $\sum_{C\in \mathcal{C}}\mathfrak{r}_C|C|$ cycles, thus finishing the proof.
%
\end{proof}

In the remainder of this section, we investigate the structure of Hopf graphs and their Leavitt path algebras. To do so, we need some useful notions and facts.

\begin{defn}
Let $G$ be a group with  a ramification data $\mathfrak{r}=\sum_{C\in \mathcal{C}}\mathfrak{r}_CC$. 

(1)	We denote by $\Delta_{G, \mathfrak{r}}$  the following subgraph of $\Gamma_{G, \mathfrak{r}}$: if $\mathfrak{r}=0$, then $\Delta_{G, \mathfrak{r}}$ is the trivial graph, else $$\Delta_{G, \mathfrak{r}}=(\Delta_{G, \mathfrak{r}}^0,\Delta_{G, \mathfrak{r}}^1,r|_{\Delta_{G, \mathfrak{r}}^1},s|_{\Delta_{G, \mathfrak{r}}^1}),$$ where $\Delta_{G, \mathfrak{r}}^0 :=\langle S_{G, \mathfrak{r}}\rangle$ and $\Delta_{G, \mathfrak{r}}^1:=r^{-1}(\Delta_{G, \mathfrak{r}}^0)$. 

(2) We denote by $\Lambda_{G, \mathfrak{r}}$  the following subgraph of $\Gamma_{G, \mathfrak{r}}$: if $S_{G, \mathfrak{r}}$ is not a submonoid of $G$, then $\Lambda_{G, \mathfrak{r}}$ is the trivial graph, else $$\Lambda_{G, \mathfrak{r}}=(\Lambda_{G, \mathfrak{r}}^0,\Lambda_{G, \mathfrak{r}}^1,r|_{\Lambda_{G, \mathfrak{r}}^1},s|_{\Lambda_{G, \mathfrak{r}}^1}),$$ where $\Lambda_{G, \mathfrak{r}}^0 :=G(S_{G, \mathfrak{r}})$ is the maximal subgroup of $G$ contained in $S_{G, \mathfrak{r}}$ and $\Lambda_{G, \mathfrak{r}}^1:=r^{-1}(\Lambda_{G, \mathfrak{r}}^0)\cap s^{-1}(\Lambda_{G, \mathfrak{r}}^0)$.
\end{defn}


For clarification, we illustrate the graphs $\Delta_{G, \mathfrak{r}}$ and  $\Lambda_{G, \mathfrak{r}}$ by presenting the following examples.
	\begin{exas}
	(1) Consider the Hopf graph $\Gamma_{G, \mathfrak{r}}$ of Example \ref{2.3} (1). We then have that $\Lambda_{G, \mathfrak{r}}=\Delta_{G, \mathfrak{r}}$ is the following graph:
	
	\begin{center}
		$\xymatrix{& \bullet_{id} \ar@/^1pc/[rd] \ar@/_1pc/[ld]& \\
			\bullet_{(123)} \ar@/_1pc/[ru] \ar@/_1pc/[rr]& & \bullet_{(132)} \ar@/^1pc/[lu] \ar@/_1pc/[ll]}$   
	\end{center}
	\medskip
	
	(2) Consider the Hopf graph $\Gamma_{G, \mathfrak{r}}$ of Example \ref{2.3} (2). We then have $\Lambda_{G, \mathfrak{r}}=\Delta_{G, \mathfrak{r}}=\Gamma_{G, \mathfrak{r}}$.
	
	(3) Consider the Hopf graph $\Gamma_{G, \mathfrak{r}}$ of Example \ref{2.4} (1). We then have that $\Lambda_{G, \mathfrak{r}}$ is the following graph:
	
	$$\xymatrix{0 \ar@(ul,ur)}$$
	\medskip 
	and  $\Delta_{G, \mathfrak{r}}$ is the following graph: 
	
	$$\xymatrix{... \ar[r]&-4 \ar@(ul,ur) \ar[r]&-2\ar@(ul,ur) \ar[r]&0\ar@(ul,ur) \ar[r]&2\ar@(ul,ur) \ar[r]&4\ar@(ul,ur) \ar[r]&...}$$
	\medskip
	
	(4) Consider the Hopf graph $\Gamma_{G, \mathfrak{r}}$ of Example \ref{2.4} (2). We then have that $\Lambda_{G, \mathfrak{r}}$ is the trivial graph and  $\Delta_{G, \mathfrak{r}}=\Gamma_{G, \mathfrak{r}}$.
\end{exas}

It is worth mentioning the following fact.

\begin{lem}\label{normalsubgroup}
Let $G$ be a group with  a ramification data $\mathfrak{r}=\sum_{C\in \mathcal{C}}\mathfrak{r}_CC$. Then $\Lambda_{G, \mathfrak{r}}^0$ and $\Delta_{G, \mathfrak{r}}^0$ are normal subgroups of $G$. 	
\end{lem}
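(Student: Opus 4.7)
The plan is to reduce both claims to a single elementary observation: the generating set $\bigcup_{C\in\mathrm{supp}(\mathfrak{r})}C$ is a union of whole conjugacy classes of $G$, hence stable under conjugation. From this, closure under conjugation (and therefore normality) will propagate through both constructions.

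First I would handle $\Delta_{G,\mathfrak{r}}^0=\langle S_{G,\mathfrak{r}}\rangle$. Any element $g\in\Delta_{G,\mathfrak{r}}^0$ has a presentation $g=c_1^{\epsilon_1}\cdots c_n^{\epsilon_n}$ with $c_i\in S_{G,\mathfrak{r}}$ and $\epsilon_i\in\{\pm 1\}$. For $h\in G$, conjugation gives
\[
hgh^{-1}=(hc_1h^{-1})^{\epsilon_1}\cdots(hc_nh^{-1})^{\epsilon_n}.
\]
Since each $c_i$ lies in some conjugacy class $C\in\mathrm{supp}(\mathfrak{r})$, the conjugate $hc_ih^{-1}$ lies in the same class $C$, hence in $S_{G,\mathfrak{r}}$. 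Therefore $hgh^{-1}\in\langle S_{G,\mathfrak{r}}\rangle=\Delta_{G,\mathfrak{r}}^0$, which gives normality.

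For $\Lambda_{G,\mathfrak{r}}^0=G(S_{G,\mathfrak{r}})$, I would first record the standard description of the maximal subgroup contained in a submonoid: when $S_{G,\mathfrak{r}}$ is a submonoid, $G(S_{G,\mathfrak{r}})=S_{G,\mathfrak{r}}\cap S_{G,\mathfrak{r}}^{-1}$, where $S_{G,\mathfrak{r}}^{-1}=\{s^{-1}\mid s\in S_{G,\mathfrak{r}}\}$; this is standard and readily checked. Next I would observe that $S_{G,\mathfrak{r}}$ itself is closed under conjugation: if $s=c_{i_1}\cdots c_{i_k}$ with $c_{i_j}\in\bigcup_{C\in\mathrm{supp}(\mathfrak{r})}C$, then $hsh^{-1}=\prod_j hc_{i_j}h^{-1}$ is again a product of elements of $\bigcup_{C\in\mathrm{supp}(\mathfrak{r})}C$ by the same class-invariance used above. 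Then $S_{G,\mathfrak{r}}^{-1}$ is also closed under conjugation since $h s^{-1} h^{-1}=(hsh^{-1})^{-1}$. Hence their intersection $\Lambda_{G,\mathfrak{r}}^0$ is closed under conjugation, and being already a subgroup by definition, it is normal.

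The only subtlety is the identification $G(S_{G,\mathfrak{r}})=S_{G,\mathfrak{r}}\cap S_{G,\mathfrak{r}}^{-1}$, but this is just the standard fact that a unit of a submonoid is precisely an element $s$ such that both $s$ and $s^{-1}$ belong to the submonoid. No genuine obstacle arises; the argument is essentially a one-liner powered by the fact that the support of $\mathfrak{r}$ consists of conjugacy classes.
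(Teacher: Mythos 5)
Your proposal is correct and follows essentially the same route as the paper: both arguments rest on the single observation that $\bigcup_{C\in\mathrm{supp}(\mathfrak{r})}C$ is conjugation-stable because it is a union of conjugacy classes, conjugate generators term by term to get normality of $\Delta_{G,\mathfrak{r}}^0$, and handle $\Lambda_{G,\mathfrak{r}}^0$ by checking that both $ghg^{-1}$ and its inverse lie in $S_{G,\mathfrak{r}}$ (your $G(S)=S\cap S^{-1}$ packaging is just a cleaner statement of what the paper does inline).
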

\begin{proof}
We first note that for all $c\in \bigcup_{C\in \rm{supp}(\mathfrak{r})}C$ and $g\in G$, we have $gcg^{-1}\in \bigcup_{C\in \rm{supp}(\mathfrak{r})}C$ and $gc^{-1}g^{-1}\in (\bigcup_{C\in \rm{supp}(\mathfrak{r})}C)^{-1}$. We now prove that $\Lambda_{G, \mathfrak{r}}^0$ is a normal subgroup of $G$. Indeed,
let $g\in G$ and $h\in \Lambda_{G, \mathfrak{r}}^0$. Write $h=c_1c_2\cdots c_n$, where $c_i\in \bigcup_{C\in \rm{supp}(\mathfrak{r})}C$ for all $i$. We then have $$ghg^{-1}=gc_1g^{-1}gc_2g^{-1}\cdots gc_ng^{-1} \text{ and } gc_ig^{-1}\in \bigcup_{C\in \rm{supp}(\mathfrak{r})}C$$ for all $i$, and so, $ghg^{-1}\in S_{G, \mathfrak{r}}$. Moreover, since $h\in \Lambda_{G, \mathfrak{r}}^0$, $h^{-1}\in S_{G, \mathfrak{r}}$. We then have $(ghg^{-1})^{-1}=gh^{-1}g^{-1} \in S_{G, \mathfrak{r}}$. So $ghg^{-1}\in \Lambda_{G, \mathfrak{r}}^0$.
This shows that $\Lambda_{G, \mathfrak{r}}^0$ is a normal subgroup of $G$.	
	
Similarly, we obtain that $\Delta_{G, \mathfrak{r}}^0$ is a normal subgroup of $G$, thus finishing the~proof.	
\end{proof}

We are now in a position to give the main result of this section, which provides us with the structure of Hopf graphs and their Leavitt path algebras.
\begin{thm} \label{notconnected}
Let $G$ be a group with a ramification data $\mathfrak{r}=\sum_{C\in \mathcal{C}}\mathfrak{r}_CC$, and $K$ an arbitrary field. Then the following statements hold:
	
$(1)$ $\Delta_{G, \mathfrak{r}}$ is a connected component of $\Gamma_{G, \mathfrak{r}}$ and $\Gamma_{G, \mathfrak{r}}=\underset{i\in G/\Delta_{G, \mathfrak{r}}^0}{\sqcup} \Omega_i$ where $\Omega_i$ is isomorphic to $\Delta_{G, \mathfrak{r}}$ as graphs;

$(2)$ $\Lambda_{G, \mathfrak{r}}$ is a strongly connected component of $\Delta_{G, \mathfrak{r}}$ and $\Pi\cong\Lambda_{G, \mathfrak{r}}$ as graphs for all strongly connected component $\Pi$ of $\Delta_{G, \mathfrak{r}}$. Furthermore, 
\begin{align*}|\mathcal{S}_{\Delta_{G, \mathfrak{r}}}|=
	\begin{cases}
		|\Delta_{G, \mathfrak{r}}^0/\Lambda_{G, \mathfrak{r}}^0| & \textnormal{if } S_{G, \mathfrak{r}} \textnormal{ is a submonoid of $G$;}\\
		|\Delta_{G, \mathfrak{r}}^0| & \textnormal{otherwise};
	\end{cases}
\end{align*}

$(3)$ $\Delta_{G, \mathfrak{r}}$ is a strongly connected component of $\Gamma_{G, \mathfrak{r}}$ if and only if $\mathfrak{r}=0$ or $S_{G, \mathfrak{r}}$ is a subgroup of $G$. Consequently, if $S_{G, \mathfrak{r}}$ is finite, then $\Delta_{G, \mathfrak{r}}$ is a strongly connected component of $\Gamma_{G, \mathfrak{r}}$;

$(4)$ If, in addition, $G$ is commutative, then $(\mathcal{S}_{\Delta_{G, \mathfrak{r}}}, \geq )$ is downward directed; 

$(5)$ $L_K(\Gamma_{G, \mathfrak{r}})\cong L_K(\Delta_{G, \mathfrak{r}})^{(G/\Delta_{G, \mathfrak{r}}^0)}$.
\end{thm}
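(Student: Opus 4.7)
The plan is to use Proposition \ref{common}(1) as the master tool, together with Lemma \ref{normalsubgroup}, and to deduce (5) from (1) and Remark \ref{directsum}.

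For (1), Proposition \ref{common}(1) applied to $\widehat{\Gamma_{G, \mathfrak{r}}}$ says that two vertices $g, h$ are joined by a path in the extended graph iff $h = gw$ for some $w \in \langle S_{G, \mathfrak{r}} \rangle = \Delta_{G, \mathfrak{r}}^0$. Hence the connected components of $\Gamma_{G, \mathfrak{r}}$ are precisely the right cosets $g\Delta_{G, \mathfrak{r}}^0$ (a partition because $\Delta_{G, \mathfrak{r}}^0$ is a subgroup by Lemma \ref{normalsubgroup}), and the component through $1_G$ carries the graph structure of $\Delta_{G, \mathfrak{r}}$. For each representative $g$, the vertex bijection $x \mapsto gx$ from $\Delta_{G, \mathfrak{r}}^0$ onto $g\Delta_{G, \mathfrak{r}}^0$ lifts to a graph isomorphism $\Delta_{G, \mathfrak{r}} \cong \Omega_i$: since ramification is by right-multiplication and $\mathfrak{r}_C$ depends only on $C$, it sends the $\mathfrak{r}_C$ edges from $x$ to $xc$ bijectively onto the $\mathfrak{r}_C$ edges from $gx$ to $gxc$. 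Item (5) then drops out: Remark \ref{directsum} together with these isomorphisms gives $L_K(\Gamma_{G, \mathfrak{r}}) \cong \bigoplus_{i \in G/\Delta_{G, \mathfrak{r}}^0} L_K(\Omega_i) \cong L_K(\Delta_{G, \mathfrak{r}})^{(G/\Delta_{G, \mathfrak{r}}^0)}$.

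For (2) and (3), the real-path half of Proposition \ref{common}(1) gives $g \geq h$ iff $g^{-1}h \in S_{G, \mathfrak{r}}$, so $g$ and $h$ lie in a common strongly connected component iff $g^{-1}h$ and $h^{-1}g$ both belong to $S_{G, \mathfrak{r}}$; when $S_{G, \mathfrak{r}}$ is a submonoid this is equivalent to $g^{-1}h \in G(S_{G, \mathfrak{r}}) = \Lambda_{G, \mathfrak{r}}^0$. Thus, inside $\Delta_{G, \mathfrak{r}}$, the strongly connected components are the right cosets of $\Lambda_{G, \mathfrak{r}}^0$ in $\Delta_{G, \mathfrak{r}}^0$, mutually isomorphic to $\Lambda_{G, \mathfrak{r}}$ by the same left-multiplication device as in (1). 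When $S_{G, \mathfrak{r}}$ is not a submonoid, Proposition \ref{common}(6) forbids cycles, so every strongly connected component is a single vertex, matching the trivial-graph convention for $\Lambda_{G, \mathfrak{r}}$ and giving the second branch of the cardinality formula. For (3), $\Delta_{G, \mathfrak{r}}$ is a single strongly connected component iff $\mathfrak{r} = 0$ (trivial graph) or $\Delta_{G, \mathfrak{r}}^0 = \Lambda_{G, \mathfrak{r}}^0$, and the latter squeezes $\langle S_{G, \mathfrak{r}} \rangle = G(S_{G, \mathfrak{r}}) \subseteq S_{G, \mathfrak{r}} \subseteq \langle S_{G, \mathfrak{r}} \rangle$, forcing $S_{G, \mathfrak{r}}$ to be a subgroup. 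The ``consequently'' clause follows because any finite subsemigroup $S$ of a group is a subgroup: for $s \in S$ the powers repeat, yielding $s^n = 1_G$ and hence $s^{-1} = s^{n-1} \in S$.

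For (4), I would use the abelian identity $\langle S_{G, \mathfrak{r}} \rangle = S_{G, \mathfrak{r}} \cdot S_{G, \mathfrak{r}}^{-1}$, proved by checking (with commutativity) that the right side is closed under products via $(s_1 t_1^{-1})(s_2 t_2^{-1}) = (s_1 s_2)(t_1 t_2)^{-1}$ and under inversion, so it is a subgroup containing $S_{G, \mathfrak{r}}$. Given strongly connected components through $g, h \in \Delta_{G, \mathfrak{r}}^0$, write $g^{-1}h = s t^{-1}$ with $s, t \in S_{G, \mathfrak{r}}$ and set $k := gs = ht$; unpacking $s$ and $t$ as products of elements of $\bigcup_{C \in \rm{supp}(\mathfrak{r})} C$ produces explicit real paths $g \to k$ and $h \to k$, so the strongly connected component of $k$ sits below those of $g$ and $h$, witnessing the downward-directed property. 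The main obstacle I foresee is the bookkeeping in (2) in the non-submonoid case: one must align the trivial-graph convention for $\Lambda_{G, \mathfrak{r}}$ with the fact that every strongly connected component of $\Delta_{G, \mathfrak{r}}$ is a single vertex with no edges, so that the stated isomorphism $\Pi \cong \Lambda_{G, \mathfrak{r}}$ reads correctly.
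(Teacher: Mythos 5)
Your proposal is correct and follows essentially the same route as the paper: Proposition \ref{common}(1) as the master reachability tool, left-multiplication isomorphisms between (strongly) connected components, coset counting against $\Delta^0_{G,\mathfrak{r}}$ and $\Lambda^0_{G,\mathfrak{r}}$, and Remark \ref{directsum} for item (5). Your only departures are cosmetic: you deduce (3) from the cardinality formula in (2) rather than via the paper's explicit cycle construction, and in (4) you package the paper's path bookkeeping into the identity $\langle S_{G,\mathfrak{r}}\rangle = S_{G,\mathfrak{r}}\,S_{G,\mathfrak{r}}^{-1}$.
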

\begin{proof}
(1) Let $x$ and $y$ be elements of $\Delta_{G, \mathfrak{r}}^0$. Then, there exist elements $\{c_i\}^n_{i=1}\subseteq (\bigcup_{C\in \rm{supp}(\mathfrak{r})}C) \cup (\bigcup_{C\in \rm{supp}(\mathfrak{r})}C)^{-1}$  such that $x^{-1}y = c_1\cdots c_n $ and $y=x c_1\cdots c_n $. By Proposition \ref{common} (1), there exists a path in  $\widehat{\Delta_{G, \mathfrak{r}}}$ from $x$ to $y$. Let $x$ be an element of $\Delta_{G, \mathfrak{r}}^0$ and let $z$ be an element of $G$. If there exists a path in  $\widehat{\Delta_{G, \mathfrak{r}}}$ from $z$ to $x$, then by Proposition \ref{common} (1), there exists an element $w\in\langle S_{G, \mathfrak{r}} \rangle=\Delta_{G, \mathfrak{r}}^0$ such that $x=zw$, and so $z=xw^{-1}\in \Delta_{G, \mathfrak{r}}^0$. This implies that $\Delta_{G, \mathfrak{r}}$ is a connected component of $\Gamma_{G, \mathfrak{r}}$.

Let $\Omega$ be a connected component of $\Gamma_{G, \mathfrak{r}}$. We claim that $\Omega$ is isomorphic to $\Delta_{G, \mathfrak{r}}$ as graphs. Indeed, let $g$ be an arbitrary vertex of $\Omega$, and let $\phi^0: \Delta_{G, \mathfrak{r}}^0 \longrightarrow \Omega^0$ be the map defined by: $\phi^0(w)=gw$ for all $w\in \Delta_{G, \mathfrak{r}}^0$. Then, if $\phi^0(w)=gw=gw'=\phi^0(w')$, then $w=w'$, and so, $\phi^0$ is an injection. Let $h$ be an element of $\Omega^0$. Since $\Omega$ is a connected graph, there exists a path in $\widehat{\Omega}$ from $g$ to $h$. By Proposition \ref{common} (1),  $h=gw$ for some $w\in \langle S_{G, \mathfrak{r}} \rangle=\Delta_{G, \mathfrak{r}}^0$, that means, $h=\phi^0(w)$. This shows that $\phi^0$ is surjective, and hence $\phi^0$ is a bijection.

For each edge $e$ in $\Delta_{G, \mathfrak{r}}$ from $v$ to $w$, we have $w=vc$ for some $c\in \bigcup_{C\in \rm{supp}(\mathfrak{r})}C$, and $\phi^0(v)=gv$ and $\phi^0(w)=gw=gvc$. Therefore, there is a unique edge in $\Omega$ from $\phi^0(v)$ to $\phi^0(w)$ with respect to $e$. This shows that there is a bijection $\phi^1: \Delta_{G, \mathfrak{r}}^1 \longrightarrow \Omega^1$ from the set of edges of $\Delta_{G, \mathfrak{r}}$ to the set of edges of $\Omega$ such that $\phi^0$ and $\phi^1$ commute with the source and range maps, and so $\Delta_{G, \mathfrak{r}}$ is isomorphic to $\Omega$ as graphs, proving the claim.

We note that $\Omega^0= \phi^0(\Delta_{G, \mathfrak{r}}^0) = g\Delta_{G, \mathfrak{r}}^0$, and so, the cardinality of all connected components of $\Gamma_{G, \mathfrak{r}}$ is equal to the  cardinality of the quotient group $G/\Delta_{G, \mathfrak{r}}^0$, as~desired. 

(2) Let $g$ and $h$ be elements of $\Lambda_{G, \mathfrak{r}}^0$. Since $\Lambda_{G, \mathfrak{r}}^0$ is a subgroup of $G$, $g^{-1}h, h^{-1}g\in \Lambda_{G, \mathfrak{r}}^0$, and so, $g^{-1}h, h^{-1}g\in S_{G, \mathfrak{r}}$. Since $h=g(g^{-1}h), g=h(h^{-1}g)$ and by Proposition \ref{common} (1), we obtain that there exist two paths $p$ and $q$ in $\Gamma_{G, \mathfrak{r}}$ such that $s(p)=g, r(p)=h$ and $s(q)=h, r(q)=g$. Let  $g\in\Lambda_{G, \mathfrak{r}}^0$ and $h\in G$ such that there exist two paths $p$ and $q$ in $\Gamma_{G, \mathfrak{r}}$ for which $s(p)=g, r(p)=h$ and $s(q)=h, r(q)=g$. We claim that $h \in\Lambda_{G, \mathfrak{r}}^0$. Indeed, by Proposition \ref{common} (1), $h=gw$ and $g=hw'$ for some $w, w'\in S_{G, \mathfrak{r}}$. Then $h=gw=hw'w$, and so, $w'w=1_G$. It implies that $w^{-1}=w'\in S_{G, \mathfrak{r}}$, and so, $h^{-1}=w^{-1}g^{-1}\in S_{G, \mathfrak{r}}$. It is obvious that $h=gw\in S_{G, \mathfrak{r}}$, and hence, $h \in\Lambda_{G, \mathfrak{r}}^0$, proving the claim. Therefore, $\Lambda_{G, \mathfrak{r}}$ is a strongly connected component of $\Delta_{G, \mathfrak{r}}$.

We next prove that $\Pi\cong\Lambda_{G, \mathfrak{r}}$ as graphs for all strongly connected component $\Pi$ of $\Delta_{G, \mathfrak{r}}$. By repeating the approach described in the proof of (1), we consider the map $\phi^0: \Lambda_{G, \mathfrak{r}}^0 \longrightarrow \Pi^0$ defined by: $\phi^0(w)=gw$ for all $w\in \Lambda_{G, \mathfrak{r}}^0$, where $g$ is an arbitrary vertex of $\Pi$. It is sufficient to show that $\phi^0$ is surjective. Let $h$ be an element of $\Pi^0$. Since $\Pi$ is a strongly connected graph, there exist two paths in $\Gamma_{G, \mathfrak{r}}$ from $g$ to $h$ and from $h$ to $g$. By using the above argument again, we have $h=gw$ and $g=hw^{-1}$ for some $w, w^{-1}\in S_{G, \mathfrak{r}}$. It implies that $w\in \Lambda_{G, \mathfrak{r}}^0$, and so, $h=gw=\phi^0(w)$, as desired.

If $S_{G, \mathfrak{r}}$ is not a submonoid of $G$, then $\Lambda_{G, \mathfrak{r}}$ is the trivial graph, and so, $|\mathcal{S}_{\Delta_{G, \mathfrak{r}}}|=|\Delta_{G, \mathfrak{r}}^0|$. Otherwise, if $S_{G, \mathfrak{r}}$ is a submonoid of $G$, then, similarly to the proof of (1), we obtain that the cardinality of all strongly connected components of $\Delta_{G, \mathfrak{r}}$ is equal to the  cardinality of the quotient group $\Delta_{G, \mathfrak{r}}^0/\Lambda_{G, \mathfrak{r}}^0$, thus showing (2).

(3) $(\Longleftarrow)$ If $\mathfrak{r}=0$, then $\Delta_{G, \mathfrak{r}}$ is a trivial graph, and so, it is a strongly connected component of $\Gamma_{G, \mathfrak{r}}$. Suppose that $\mathfrak{r}\neq 0$ and $S_{G, \mathfrak{r}}$ is a subgroup of $G$. We then have $\Delta_{G, \mathfrak{r}}^0=\Lambda_{G, \mathfrak{r}}^0=S_{G, \mathfrak{r}}$. Therefore, $\Delta_{G, \mathfrak{r}}=\Lambda_{G, \mathfrak{r}}$ is a strongly connected component of $\Gamma_{G, \mathfrak{r}}$ by item (2).

$(\Longrightarrow)$ Suppose that $\Delta_{G, \mathfrak{r}}$ is a strongly connected component of $\Gamma_{G, \mathfrak{r}}$. If $\Delta_{G, \mathfrak{r}}$ is the trivial graph, then $\mathfrak{r}=0$. Assume that $\Delta_{G, \mathfrak{r}}$ has an edge. We claim that $S_{G, \mathfrak{r}}$ is a subgroup of $G$. Indeed, let $c\in \bigcup_{C\in \rm{supp}(\mathfrak{r})}C$ and $e$ an edge in $\Delta_{G, \mathfrak{r}}$ such that $r(e)=s(e)c$.
Since $\Delta_{G, \mathfrak{r}}$ is a strongly connected component of $\Gamma_{G, \mathfrak{r}}$, there exists a path $p_e$ in $\Delta_{G, \mathfrak{r}}$ such that $s(p_e)=r(e)$ and $r(p_e)=s(e)$, and so, $ep_e$ is a cycle in  $\Delta_{G, \mathfrak{r}}$. By Proposition \ref{common} (6), $S_{G, \mathfrak{r}}$ is a submonoid of $G$. We write $p_e=f_1\cdots f_n$ where $r(f_i)=s(f_i)w_i$ for some $w_i\in \bigcup_{C\in \rm{supp}(\mathfrak{r})}C$,  $1\leq i \leq n$. Since $ep_e$ is a cycle in  $\Delta_{G, \mathfrak{r}}$, $s(e)=r(p_e)=r(f_n)=s(f_n)w_n=\cdots =s(e)cw_1\cdots w_n$, and so, $cw_1\cdots w_n=1_G$. It shows that $S_{G, \mathfrak{r}}$ is a subgroup of $G$, proving the claim.

Assume that $S_{G, \mathfrak{r}}$ is a finite subsemigroup of $G$. If $S_{G, \mathfrak{r}}=\varnothing$, then $\Delta_{G, \mathfrak{r}}$ is the trivial graph, therefore, $\Delta_{G, \mathfrak{r}}$ is a strongly connected component of $\Gamma_{G, \mathfrak{r}}$. Otherwise, $S_{G, \mathfrak{r}}$ is a nonempty finite subsemigroup of $G$, then $S_{G, \mathfrak{r}}$ is also a subgroup of $G$, and so, $\Delta_{G, \mathfrak{r}}$ is a strongly connected component of $\Gamma_{G, \mathfrak{r}}$.

(4) Assume that $G$ is a commutative group.	Let $x, y\in \Delta_{G, \mathfrak{r}}^0$. We claim that there exists an element $z\in \Delta_{G, \mathfrak{r}}^0$ such that $x\geq z$ and $y\geq z$. Since $G$ is a commutative, we can write $x=c_1c_2\cdots c_nd_1^{-1}d_2^{-1}\cdots d_m^{-1}$ and $y=k_1k_2\cdots k_{n'}l_1^{-1}l_2^{-1}\cdots l_{m'}^{-1}$, where $c_i, d_j, k_{i'}, l_{j'}\in \bigcup_{C\in \rm{supp}(\mathfrak{r})}C$ for all $1\leq i\leq n$, $1\leq j\leq m$, $1\leq i'\leq n'$, $1\leq j'\leq m'$. Consider the path $p=e_1\cdots e_m$ and $q=f_1\cdots f_{n'}$ such that $s(e_1)=x, r(e_i)=s(e_i)d_i$, $1\leq i \leq m$ and $s(f_1)=r(p), r(f_i)=s(f_i)k_i$, $1\leq i \leq n'$. We then have $s(pq)=x$ and $r(pq)=c_1c_2\cdots c_nk_1k_2\cdots k_{n'}=:z\in \Delta_{G, \mathfrak{r}}^0$, and so, $x\geq z$. Similarly, there exist  paths $p'$, $q'$ in $\Delta_{G, \mathfrak{r}}$ such that $s(p'q')=y$, $r(p'q')=k_1k_2\cdots k_{n'}c_1c_2\cdots c_n=z$, and so, $y\geq z$, proving the claim. This implies that $(\mathcal{S}_{\Delta_{G, \mathfrak{r}}},\geq)$ is downward directed. 
	
(5) It follows from item (1) and Remark \ref{directsum}, thus finishing the proof.
\end{proof}

It is worth mentioning the following note.

\begin{rem} There exist two graphs $E$ and $F$ for which $E$ is a Hopf graph and $F$ is not a Hopf graph, but $L_K(E)\cong L_K(F)$, where $K$ is an arbitrary field. For example, let $E=\Gamma_{G, \mathfrak{r}}$ be the Hopf graph of Example \ref{2.5} with $n=3$ and $F$ the following graph: 
	$$\xymatrix{\bullet \ar@/^.5pc/[r] \ar@/_.5pc/[r] &\bullet \ar@(r,u) \ar@(d,r) \ar@(rd,ru)}$$\\
Then, since $F$ has two strongly connected components which are not isomorphic to each other, and by Theorem \ref{notconnected} (2), $F$ is not a Hopf graph. It is well known that $L_K(E)\cong L_K(1,3)$ and $L_K(F)\cong M_3(L_K(1,3))$ (by  \cite[Theorem 2.11 (2)]{np:tsolpaatibnp}). By \cite[Theorem 4.4]{aap:ibLpatmr}, we have $L_K(1,3)\cong M_3(L_K(1,3))$, and so $L_K(E)\cong L_K(F)$.
\end{rem}

We close this section with the following useful corollary.

\begin{cor}\label{singlecycle}
Let $G$ be a group with  a ramification data $\mathfrak{r}=\sum_{C\in \mathcal{C}}\mathfrak{r}_CC$. Then the following statements hold:
	
$(1)$ If $\mathfrak{r}=0$, then $\Gamma_{G, \mathfrak{r}}$ is a disjoint union of isolated vertices  and $$L_K(\Gamma_{G, \mathfrak{r}}) = K^{(G)}.$$
	
$(2)$ If $\sum_{C\in \mathcal{C}}\mathfrak{r}_C|C|= 1$ and $S_{G, \mathfrak{r}}$ is a submonoid of $G$, then $\Gamma_{G, \mathfrak{r}}$ is a disjoint union of single cycles, $S_{G, \mathfrak{r}}$ is a finite normal subgroup of $G$ and $$L_K(\Gamma_{G, \mathfrak{r}}) =M_{|S_{G, \mathfrak{r}}|}(K[x, x^{-1}])^{(G/S_{G, \mathfrak{r}})}.$$
\end{cor}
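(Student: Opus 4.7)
The plan for part (1) is immediate: Proposition~\ref{common}(4) says that when $\mathfrak{r}=0$ the graph $\Gamma_{G,\mathfrak{r}}$ is a disjoint union of $|G|$ isolated vertices. Since the Leavitt path algebra of a single isolated vertex $v$ is just $K$ (the relation $v^{2}=v$ is the only one imposed), Remark~\ref{directsum} assembles these into $L_K(\Gamma_{G,\mathfrak{r}})\cong \bigoplus_{g\in G}K = K^{(G)}$. No subtlety is involved.

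For part (2), I will first unpack the numerical condition $\sum_{C}\mathfrak{r}_C|C|=1$. Because each $\mathfrak{r}_C\in\mathbb{N}$ and each $|C|\geq 1$, exactly one class $C_0$ satisfies $\mathfrak{r}_{C_0}=|C_0|=1$, with $\mathfrak{r}_C=0$ for every other class. Writing $C_0=\{c\}$, the condition $|C_0|=1$ is equivalent to $c$ lying in the centre of $G$, and hence $S_{G,\mathfrak{r}}=\{c,c^2,c^3,\ldots\}$. The submonoid hypothesis is precisely $1_G\in S_{G,\mathfrak{r}}$, i.e.\ $c^n=1_G$ for some $n\geq 1$; taking $n$ minimal, $S_{G,\mathfrak{r}}=\langle c\rangle$ is a finite cyclic subgroup of order $n=|S_{G,\mathfrak{r}}|$, and is normal in $G$ since $c$ is central.

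Next I would identify $\Delta_{G,\mathfrak{r}}$. Its vertex set is $\Delta^0_{G,\mathfrak{r}}=\langle S_{G,\mathfrak{r}}\rangle=S_{G,\mathfrak{r}}=\{1_G,c,\ldots,c^{n-1}\}$, and from each $c^i$ there is exactly one edge, landing at $c^{i+1}$ (indices mod $n$). So $\Delta_{G,\mathfrak{r}}$ is a single cycle of length $n$, and Theorem~\ref{notconnected}(1) realises $\Gamma_{G,\mathfrak{r}}$ as a disjoint union of $|G/S_{G,\mathfrak{r}}|$ copies of this cycle, giving the asserted graph-theoretic description. The Leavitt path algebra of a single cycle of length $n$ is well known to be $M_n(K[x,x^{-1}])$ (see, e.g., \cite[Theorem 2.11(2)]{np:tsolpaatibnp}), and combining this with Theorem~\ref{notconnected}(5) yields
\[
L_K(\Gamma_{G,\mathfrak{r}})\cong L_K(\Delta_{G,\mathfrak{r}})^{(G/S_{G,\mathfrak{r}})}\cong M_{|S_{G,\mathfrak{r}}|}(K[x,x^{-1}])^{(G/S_{G,\mathfrak{r}})}.
\]
The only non-routine step is the recognition that $|C_0|=1$ forces centrality of $c$, after which the submonoid hypothesis upgrades $S_{G,\mathfrak{r}}$ from a subsemigroup to the finite cyclic subgroup $\langle c\rangle$; I do not anticipate any genuine obstacle beyond this, as the rest is bookkeeping on results already established in the paper.
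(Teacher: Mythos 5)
Your proof is correct and follows essentially the same route as the paper: reduce to the single class $C_0=\{c\}$, use the submonoid hypothesis to get $c^n=1_G$ so that $S_{G,\mathfrak{r}}=\langle c\rangle$ is a finite cyclic group, identify $\Delta_{G,\mathfrak{r}}$ as a single cycle, and apply Theorem~\ref{notconnected} together with \cite[Theorem 2.11]{np:tsolpaatibnp}. The only cosmetic difference is that you derive normality from the centrality of $c$ (forced by $|C_0|=1$), whereas the paper invokes Lemma~\ref{normalsubgroup}; both are valid.
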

\begin{proof}
(1) It immediately follows from Proposition \ref{common} (4) and Theorem \ref{notconnected}. 
	
(2) Assume that $\sum_{C\in \mathcal{C}}\mathfrak{r}_C|C|= 1$ and $S_{G, \mathfrak{r}}$ is a submonoid of $G$. We then have $\bigcup_{C\in \rm{supp}(\mathfrak{r})}C=\{c\}$ for some $c\in G$. Since $S_{G, \mathfrak{r}}$ is a submonoid of $G$, there exists a positive integer $n$ such that $c^n=1_G$, and so, $S_{G, \mathfrak{r}}=\Delta_{G, \mathfrak{r}}^0$ is the cyclic subgroup of $G$ generated by $c$, and $S_{G, \mathfrak{r}}$ is also a normal subgroup of $G$ (by Lemma \ref{normalsubgroup}).  By \cite[Theorem 2.11]{np:tsolpaatibnp}, $L_K(\Delta_{G, \mathfrak{r}})=M_{{|S_{G, \mathfrak{r}}|}}(K[x, x^{-1}])$. Then, by Theorem \ref{notconnected},  $\Gamma_{G, \mathfrak{r}}$ is a disjoint union of single cycles and $L_K(\Gamma_{G, \mathfrak{r}}) = M_{|S_{G, \mathfrak{r}}|}(K[x, x^{-1}])^{(G/S_{G, \mathfrak{r}})}$, thus finishing the proof. 
\end{proof}

\section{Applications}
In this section, based on Theorem \ref{notconnected}, we characterize the Gelfand-Kirillov dimension (Theorem \ref{GKdim}), the stable rank (Theorem \ref{stablerank}), the purely infinite simplicity (Theorem \ref{purely}) and the existence of a nonzero finite dimensional representation (Theorem \ref{fdm}) of  Leavitt path algebras of Hopf graphs via ramification datas.

\subsection{Gelfand-Kirillov dimension}
We begin this subsection by recalling some general notions on the Gelfand-Kirillov dimension of algebras. Given a field $K$ and a finitely generated $K$-algebra $A$. The \textit{Gelfand-Kirillov dimension} of $A$ ($\text{GKdim(A)}$ for short) is defined to be
$$\text{GKdim}(A) :=\limsup\limits_{n\rightarrow \infty}\log_{n}(\dim(V^{n})),$$
where $V$ is a finite dimensional subspace of $A$ that generates $A$ as an algebra over $K$. This definition is independent of the choice of $V$. If $A$ does not happen to be finitely generated over $K$, the Gelfand-Kirillov dimension of $A$ is defined to be
$$\text{GKdim}(A) = \sup\{\text{GKdim}(B)\mid B \text{ is a finitely generated subalgebra of } A\}.$$ 

In \cite{aajz:lpaofgkd} Alahmadi, Alsulami, Jain and Zelmanov  determined the Gelfand-Kirillov dimension of Leavitt path algebras of finite graphs. In \cite{mm:gaatgkd} Moreno-Fern\'andez and Siles Molina extended this to arbitrary graphs. We should mention this result here. To do so, we need to recall useful notions of graph theory.

Let $E$ be an arbitrary graph. A cycle $c$ in $E$ is said to be an {\it exclusive cycle} if it is disjoint with every other cycle; equivalently, no vertex on $c$ is the base of a different cycle other than  a cyclic permutation of $c$. We say that $E$ satisfies {\it Condition} (EXC) if every cycle of $E$ is an exclusive cycle.

For two cycles $c$ and $c'$, we write $c\Rightarrow c'$ if there exists a path that starts in $c$ and ends in $c'$. A sequence of cycles $c_1, \cdots , c_k$ is a chain of length $k$ if $c_1\Rightarrow \cdots \Rightarrow c_k$. We say that such a chain has an {\it exit} if the cycle $c_k$ has an exit. Let $d_1$ be the maximal length of a chain of cycles in $E$, and let $d_2$ be the maximal length of a chain of cycles with an exit in $E$. For every field $K$, by \cite[Theorem 3.21]{mm:gaatgkd}, 
$\text{GKdim}(L_K(E))$ is finite if and only if $E$ satisfies Condition (EXC) and the maximal length of chains of cycles in $E$ is finite. In this case, $\text{GKdim}(L_K(E)) = \max\{2d_1-1, 2d_2\}$. Consequently, every positive integer can arise as the Gelfand-Kirillov dimension of a Leavitt path algebra.

\begin{lem}\label{sequencecycle}
Let $G$ be a group with  a ramification data $\mathfrak{r}=\sum_{C\in \mathcal{C}}\mathfrak{r}_CC$ such that $\sum_{C\in \mathcal{C}}\mathfrak{r}_C|C|\geq 2$. Then,  for every cycle $\alpha$ in  $\Gamma_{G, \mathfrak{r}}$,  there exists a cycle $\beta$ in  $\Gamma_{G, \mathfrak{r}}$ such that $\beta$ is not a cyclic permutation of $\alpha$ and $\beta\Rightarrow \alpha$. 

\end{lem}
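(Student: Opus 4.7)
The plan is to exploit the hypothesis $\sum_{C\in\mathcal{C}}\mathfrak{r}_C|C|\ge 2$, which by Proposition~\ref{common}(2) forces $|r^{-1}(g)|\ge 2$ at every vertex $g$. Applying this at $g := s(e_1) = r(e_n)$, where $\alpha = e_1\cdots e_n$, I pick an edge $e\in r^{-1}(g)$ with $e\ne e_n$ and set $u := s(e)$. Writing $c_i$ for the generator of $e_i$, the distinctness of the vertices $g, gc_1,\ldots,gc_1\cdots c_{n-1}$ of $\alpha$ implies that $r(e_i)=g$ only for $i=n$, so the chosen $e$ does not lie in $\{e_1,\ldots,e_n\}$. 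Moreover, by Proposition~\ref{common}(6), the existence of $\alpha$ shows that $S_{G,\mathfrak{r}}$ is a submonoid of $G$, and hence that every vertex of $\Gamma_{G,\mathfrak{r}}$ is the base of some cycle.

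If $u = gc_1\cdots c_k$ with $0\le k\le n-1$ lies in $\alpha^0$ (with $u=g$ when $k=0$), I take $\beta := e_1 e_2 \cdots e_k\, e$, interpreted as $\beta := e$ when $k=0$. This is a closed walk $g\to gc_1\to\cdots\to u \to g$ whose intermediate vertices form a subset of $\alpha^0$, so they are distinct; hence $\beta$ is a cycle based at $g$. Since the edge $e$ appears in $\beta$ but not in $\alpha$, $\beta$ cannot coincide with any cyclic permutation of $\alpha$ (every such permutation uses exactly the edges $e_1,\ldots,e_n$), and $\beta\Rightarrow\alpha$ holds because $g\in\beta^0\cap\alpha^0$.

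If instead $u\notin\alpha^0$, I simply take any cycle $\beta$ based at $u$, which exists by the submonoid remark. Cyclic permutations of $\alpha$ are based at vertices of $\alpha^0$, whereas $u$ is not, so $\beta$ is not a cyclic permutation of $\alpha$; and the edge $e$ itself provides a length-one real path from $u\in\beta^0$ to $g\in\alpha^0$, giving $\beta\Rightarrow\alpha$. The only point requiring care is ensuring $\beta$ differs from every cyclic permutation of $\alpha$; this is handled in the first case by the presence of the edge $e\notin\alpha$, and in the second case by the mismatch of base vertices.
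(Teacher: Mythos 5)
Your proof is correct, and it takes a genuinely different (and more uniform) route than the paper's. The paper fixes the generating set $\bigcup_{C\in\operatorname{supp}(\mathfrak{r})}C$ and argues by cases on its structure: a single generator of multiplicity at least $2$ (parallel edges give a second cycle at $s(\alpha)$), a second generator of finite order (whose powers give a second cycle based at $s(\alpha)$), and the remaining case, where it left-translates $\alpha$ by $x^{-1}$ to get a cycle based at $s(\alpha)x^{-1}$ feeding into $\alpha$. You instead use Proposition \ref{common} (2) to get $|r^{-1}(s(\alpha))|\ge 2$, select a second incoming edge $e\ne e_n$ at the base point (correctly noting that $e\notin\{e_1,\dots,e_n\}$ because a cycle returns to its base only at the last step), and then either shortcut through an initial segment of $\alpha$ when $s(e)\in\alpha^0$, or produce a cycle at $s(e)$ when $s(e)\notin\alpha^0$. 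This collapses the paper's case analysis on orders of elements into a single in-degree count, and the "edge $e$ is not an edge of $\alpha$" and "base vertex outside $\alpha^0$" arguments for non-coincidence with cyclic permutations are cleaner than the paper's vertex comparisons. The one step you should spell out is the existence of a cycle based at $u=s(e)$ in Case B: rather than invoking the submonoid property abstractly, just take $\beta$ to be the left translate of $\alpha$ by $ug^{-1}$; since left multiplication is a bijection of $G$, the translated closed path has distinct vertices and hence is a cycle, and its vertex set contains $u\notin\alpha^0$, so it cannot be a cyclic permutation of $\alpha$. With that sentence added, the argument is complete.
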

\begin{proof}
Let $\alpha= e_1\cdots e_n$ be a cycle in $\Gamma_{G, \mathfrak{r}}$. We claim that $\beta\Rightarrow \alpha$ for some cycle $\beta$ in $\Gamma_{G, \mathfrak{r}}$. Indeed, by Proposition \ref{common} (6), $S_{G, \mathfrak{r}}$ is a submonoid of $G$, and so, there exists elements $\{c_i\}_{i=1}^{n}\subseteq \bigcup_{C\in \rm{supp}(\mathfrak{r})}C$ such that $c_1\cdots c_n=1_G$, $s(\alpha)=s(e_1)$ and $s(e_i) = s(\alpha)c_1\cdots c_{i-1}$ for all $2\le i\le n$, and $r(e_i) = s(\alpha)c_1\cdots c_i$ for all $1\le i \le n$. We consider the following two cases:
	
\emph{Case }1: $|\bigcup_{C\in \rm{supp}(\mathfrak{r})}C|=1$. We then have  $\rm{supp}(\mathfrak{r})=\{C\}$ and $C=\{c\}$ for some $c\in G$, and  $\mathfrak{r}_C\geq 2$. Therefore, by the definition of $\Gamma_{G, \mathfrak{r}}$, there exists $\mathfrak{r}_C$ edges from  $s(e_i)$ to $r(e_i)$ for all $1\le i \le n$, and so, $s(\alpha)$ is base of at least $\mathfrak{r}_C$ cycles, that means, there exists a cycle $\beta$ in  $\Gamma_{G, \mathfrak{r}}$ such that $\beta$ is not a cyclic permutation of $\alpha$ and $\beta\Rightarrow \alpha$, as desired.
	
\emph{Case }2: $|\bigcup_{C\in \rm{supp}(\mathfrak{r})}C| \ge 2$. 
We consider the following subcases:

\emph{Case 2.1}: $x^m= 1_G$ for some $x\in \bigcup_{C\in \rm{supp}(\mathfrak{r})}C\setminus\{c_n\}$ and positive integer $m$. Then, $s(\alpha)$ is the base of a cycle $\beta = f_1\cdots f_{|x|}$ which is different from $\alpha$, where $|x|$ is the order of $x$, $s(f_1) = s(\alpha)$, $s(f_i) = s(\alpha)x^{i-1}$ for all $2\le i\le n$, and $r(f_i) = s(\alpha)x^i$ for all $1\le i\le n$. This implies that $\beta\Rightarrow \alpha$, as desired.

\emph{Case 2.2}: $x^m \neq 1_G$ for all $x\in \bigcup_{C\in \rm{supp}(\mathfrak{r})}C\setminus\{c_n\}$ and positive integer $m$.
Let $x$ be an arbitrary element of $\bigcup_{C\in \rm{supp}(\mathfrak{r})}C\setminus\{c_n\}$. 
 We then have $s(\alpha)x^{-1} \neq s(\alpha)$, $s(\alpha)x^{-1} \neq s(\alpha)c_1\cdots c_{n-1}$ and 
there exists an edge in $\Gamma_{G, \mathfrak{r}}$ from $s(\alpha)x^{-1}$ to $s(\alpha)$. Moreover, we receive that $s(\alpha)x^{-1}$ is the base of the cycle $\beta = f_1\cdots f_n $, where $s(\alpha)x^{-1}=s(f_1)$ and $s(f_i) = s(\alpha)x^{-1}c_1\cdots c_{i-1}$ for all $2\le i\le n$, and $r(f_i) = s(\alpha)x^{-1}c_1\cdots c_i$ for all $1\le i \le n$. These observations show that there exists a cycle $\beta$ in  $\Gamma_{G, \mathfrak{r}}$ such that $\beta$ is not a cyclic permutation of $\alpha$ and $\beta\Rightarrow \alpha$, as desired.

Therefore, in any case, we arrive at that  there exists a cycle $\beta$ in  $\Gamma_{G, \mathfrak{r}}$ such that $\beta$ is not a cyclic permutation $\alpha$ and $\beta\Rightarrow \alpha$, thus finishing the proof.
\end{proof}

In the following theorem, we compute the Gelfand-Kirillov dimension of Leavitt path algebras of Hopf graphs via ramification datas, showing that for Hopf graphs, there are only very few possible Gelfand-Kirillov dimensions.

\begin{thm}\label{GKdim}
	Let $G$ be a group with  a ramification data $\mathfrak{r}=\sum_{C\in \mathcal{C}}\mathfrak{r}_CC$ and $K$ an arbitrary field. Then 
	\begin{align*}\rm{GKdim}(L_K(\Gamma_{G, \mathfrak{r}}))=
		\begin{cases}
			\infty & \textnormal{if } \sum_{C\in \mathcal{C}}\mathfrak{r}_C|C|\geq 2 \textnormal{ and } S_{G, \mathfrak{r}} \textnormal{ is a submonoid of $G$;}\\
			1      & \textnormal{if } \sum_{C\in \mathcal{C}}\mathfrak{r}_C|C|=1 \textnormal{ and } S_{G, \mathfrak{r}} \textnormal{ is a submonoid of $G$;} \\
			0      & \textnormal{otherwise}.
		\end{cases}
	\end{align*}
\end{thm}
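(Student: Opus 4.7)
The plan is to reduce via Theorem \ref{notconnected}(5) --- which gives $L_K(\Gamma_{G,\mathfrak{r}}) \cong L_K(\Delta_{G,\mathfrak{r}})^{(G/\Delta_{G,\mathfrak{r}}^0)}$ --- together with the elementary fact that GKdim is invariant under passing from $A$ to $A^{(X)}$ for any nonempty index set $X$, because every finitely generated subalgebra of $A^{(X)}$ is contained in a finite subsum of copies of $A$. I then split into three cases matching the trichotomy and invoke the characterization of \cite[Theorem 3.21]{mm:gaatgkd}, which says that $\mathrm{GKdim}(L_K(E))$ is finite if and only if $E$ satisfies Condition (EXC) and the maximal chain length of cycles in $E$ is finite, in which case $\mathrm{GKdim}(L_K(E))=\max\{2d_1-1,2d_2\}$.

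For the \emph{otherwise} clause, either $\mathfrak{r}=0$, in which case Corollary \ref{singlecycle}(1) gives $L_K(\Gamma_{G,\mathfrak{r}})\cong K^{(G)}$ and the GKdim is $0$; or $\mathfrak{r}\neq 0$ while $S_{G,\mathfrak{r}}$ is not a submonoid, in which case Proposition \ref{common}(6) makes $\Gamma_{G,\mathfrak{r}}$ acyclic, so $d_1=d_2=0$ and the formula yields GKdim $=0$. For the middle case $\sum_C \mathfrak{r}_C|C|=1$ with $S_{G,\mathfrak{r}}$ a submonoid, Corollary \ref{singlecycle}(2) gives the explicit description $L_K(\Gamma_{G,\mathfrak{r}})\cong M_{|S_{G,\mathfrak{r}}|}(K[x,x^{-1}])^{(G/S_{G,\mathfrak{r}})}$; since GKdim is preserved by passing to matrix rings and to arbitrary direct sums of copies, the value is $\mathrm{GKdim}(K[x,x^{-1}])=1$.

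The main case is $\sum_C\mathfrak{r}_C|C|\geq 2$ with $S_{G,\mathfrak{r}}$ a submonoid, where the aim is to show GKdim $=\infty$ by obstructing the finiteness criterion. By Proposition \ref{common}(6), $\Gamma_{G,\mathfrak{r}}$ has at least one cycle $\alpha_1$, and iterating Lemma \ref{sequencecycle} produces a backward chain $\cdots\Rightarrow\alpha_3\Rightarrow\alpha_2\Rightarrow\alpha_1$ in which each $\alpha_{k+1}$ is not a cyclic permutation of $\alpha_k$. If infinitely many of the $\alpha_k$ are pairwise inequivalent up to cyclic permutation, then the maximal chain length is unbounded and GKdim $=\infty$. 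Otherwise, by pigeonhole some $\alpha_i$ and $\alpha_j$ with $i<j$ agree up to cyclic permutation, and the composite path $\alpha_j\Rightarrow\alpha_{j-1}\Rightarrow\cdots\Rightarrow\alpha_{i+1}\Rightarrow\alpha_i$, together with a traversal of (a subarc of) $\alpha_i$, produces a closed path at a vertex of $\alpha_i$ that uses an edge not in $\alpha_i$ (such an edge must appear, since $\alpha_{i+1}$ is not a cyclic permutation of $\alpha_i\simeq\alpha_j$); extracting a cycle at that vertex yields one distinct from every cyclic permutation of $\alpha_i$, violating Condition (EXC), and again forcing GKdim $=\infty$. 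The main obstacle will be this final bookkeeping step: Lemma \ref{sequencecycle} only guarantees the existence of a predecessor cycle without control over where it is based, so one must carefully extract the intersecting non-equivalent cycle at a vertex of $\alpha_i$ from the composite path.
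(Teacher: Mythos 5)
Your proposal is correct and follows essentially the same route as the paper's proof: the two degenerate cases are handled exactly as in the paper via Proposition \ref{common}(6) and Corollary \ref{singlecycle}, and the main case $\sum_{C}\mathfrak{r}_C|C|\geq 2$ rests on the same dichotomy extracted from Lemma \ref{sequencecycle} together with \cite[Theorem 3.21]{mm:gaatgkd}, namely that either Condition (EXC) fails or chains of cycles have unbounded length. The only difference is that you spell out the iteration/pigeonhole argument (and the extraction of an intersecting cycle violating (EXC)) that the paper leaves implicit in a single sentence; this elaboration is sound.
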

\begin{proof} 	
If  $\sum_{C\in \mathcal{C}}\mathfrak{r}_C|C|=1$ and $S_{G, \mathfrak{r}}$ is a submonoid of $G$, then by Corollary \ref{singlecycle} (2),  $\Gamma_{G, \mathfrak{r}}$ is a disjoint union of single cycles, and so, $\text{GKdim}(L_K(\Gamma_{G, \mathfrak{r}}))=1$, by \cite[Theorem 3.21]{mm:gaatgkd}. If $\sum_{C\in \mathcal{C}}\mathfrak{r}_C|C|\geq 2$ and $S_{G, \mathfrak{r}}$ is a submonoid of $G$, then by Proposition \ref{common} (6), $\Gamma_{G, \mathfrak{r}}$ has a cycle. Then, by Lemma \ref{sequencecycle}, we obtain that either $\Gamma_{G, \mathfrak{r}}$ does not satisfy Condition (EXC) or  $\Gamma_{G, \mathfrak{r}}$ satisfies Condition (EXC) and the maximal length of chains of cycles in $\Gamma_{G, \mathfrak{r}}$ is infinite. By \cite[Theorem 3.21]{mm:gaatgkd} again, we immediately get that $\text{GKdim}(L_K(\Gamma_{G, \mathfrak{r}}))=\infty$. Otherwise, $S_{G, \mathfrak{r}}$ is not a submonoid of $G$, and so, $\Gamma_{G, \mathfrak{r}}$ has no cycles, by Proposition \ref{common} (6). Then, by \cite[Theorem 3.21]{mm:gaatgkd},  $\text{GKdim}(L_K(\Gamma_{G, \mathfrak{r}}))=0$, thus finishing the proof.
\end{proof}

For clarification, we illustrate the statement of Theorem \ref{GKdim} by presenting the following examples.
\begin{exas}\label{exGK}
(1)  Consider the Hopf graph $\Gamma_{G, \mathfrak{r}}$ of Example \ref{2.3} (1). We then have that $\sum_{C\in \mathcal{C}}\mathfrak{r}_C|C|=2$ and $S_{G, \mathfrak{r}}=\{id, (123), (132)\}$ is a submonoid of $G=S_3$, and so $\text{GKdim}(L_K(\Gamma_{G, \mathfrak{r}}))=\infty$, by Theorem \ref{GKdim}. 

(2) Consider the Hopf graph $\Gamma_{G, \mathfrak{r}}$ of Example \ref{2.4} (1). We then have that $\sum_{C\in \mathcal{C}}\mathfrak{r}_C|C|=2$ and $S_{G, \mathfrak{r}}=\{0, 2n\mid n\geq 1\}$ is a submonoid of $G=\mathbb{Z}$, and so  $\text{GKdim}(L_K(\Gamma_{G, \mathfrak{r}}))=\infty$, by Theorem \ref{GKdim}.

(3) Consider the Hopf graph $\Gamma_{G, \mathfrak{r}}$ of Example \ref{2.5} with $n=1$. We have that $\sum_{C\in \mathcal{C}}\mathfrak{r}_C|C|=n=1$ and $S_{G, \mathfrak{r}}=\{1_G\}$ is a submonoid of $G$, and so $\text{GKdim}(L_K(\Gamma_{G, \mathfrak{r}}))=1$, by Theorem \ref{GKdim}.
		
(4) Consider the Hopf graph $\Gamma_{G, \mathfrak{r}}$ of Example \ref{2.4} (2). We have that $S_{G, \mathfrak{r}}=\{2m+3n \mid m,n\geq1\}$ is not a submonoid of $G=\mathbb{Z}$, and so $\text{GKdim}(L_K(\Gamma_{G, \mathfrak{r}}))=0$, by Theorem \ref{GKdim}.		
\end{exas}

\subsection{Purely infinite simplicity} 
An idempotent $e$ in a ring $R$ is called \emph{infinite} if $eR$ is isomorphic as a right $R$-module to a proper direct summand of itself. $R$ is called \emph{purely infinite} in case every right ideal of $R$ contains an infinite idempotent. 
In \cite{ap:pislpa06, ap:tlpaoag08} Abrams and Aranda Pino  provided criteria for Leavitt path algebras of countable graphs to be purely infinite simple. This result was extended to arbitrary graphs in \cite[Theorem 3.1.10]{aam:lpa}. We should mention the result in Theorem \ref{pis-lpas} below. To do so, we need to recall some notions.

Let $E$ be a graph and $H$ a subset of $E^0$. We say $H$ is \emph{hereditary} if for all $v\in H$ and $w\in E^0$, $v\geq w$ implies $w\in H$. We say $H$ is \emph{saturated} if whenever $v\in E^0$ has the property that $v$ is regular, $\{r(e), e\in s^{-1}(v)\}\subseteq H$, then $v\in H$. 
We say that a vertex $v \in E^0$ is \emph{cofinal} if for every $\gamma \in E^{\geq \infty}$ there is a vertex $w$ in the path $\gamma$ such that $v\geq w$. We say that a graph $E$ is \emph{cofinal} if every vertex in $E$ is cofinal.
We should note that a graph $E$ is cofinal if and only if the only hereditary and saturated subset of $E^0$ are $\varnothing$ and $E^0$ (see, e.g. \cite[Lemma 2.9.6]{aam:lpa}). 

The following theorem provides us with criteria for Leavitt path algebras of graphs to be purely infinite simple, which is very useful to prove the main result of this subsection.

\begin{thm}[{\cite[Theorem 3.1.10]{aam:lpa}}]\label{pis-lpas}
The Leavitt path algebra $L_K(E)$ of a graph $E$ with coefficients in a field $K$ is purely infinite simple if and only if the following conditions are satisfied:	

$(1)$ The only hereditary and saturated subsets of $E^0$ are $\varnothing$ and $E^0$;

$(2)$ Every cycle in $E$ has an exit;

$(3)$ $E$ has a cycle. 
\end{thm}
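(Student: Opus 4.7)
The plan is to prove Theorem \ref{pis-lpas} by establishing both implications of the equivalence, leveraging the well-known simplicity criterion for Leavitt path algebras as an intermediate step.

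For the forward direction, assume $L_K(E)$ is purely infinite simple. In particular it is simple, and the classical simplicity characterization already yields conditions (1) and (2): if $H\subsetneq E^0$ were a non-trivial hereditary saturated subset, the ideal $I(H)$ generated by $H$ would be a non-trivial proper two-sided ideal, contradicting simplicity; and if some cycle $c$ had no exit, the corner $s(c)L_K(E)s(c)$ would be isomorphic to $K[x,x^{-1}]$, producing a non-trivial proper ideal. For condition (3), if $E$ were acyclic then $L_K(E)$ would be isomorphic to a direct limit of finite direct sums of matrix algebras over $K$ (an ultramatricial algebra), which is stably finite and contains no infinite idempotents, contradicting pure infiniteness.

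For the backward direction, assume (1), (2), (3). First deduce simplicity from (1) and (2) via the classical criterion. Then use (3) and (2) to exhibit an infinite idempotent: pick a cycle $c=e_1\cdots e_n$ with exit, based at a vertex $v=s(c)$. Because $v$ is a regular vertex with $|s^{-1}(v)|\ge 2$ (the exit forces another edge besides $e_1$), the Cuntz--Krieger relation gives $v=\sum_{e\in s^{-1}(v)}ee^*$, and in particular $v=e_1e_1^*+p$ with $p=\sum_{e\ne e_1}ee^*$ a non-zero orthogonal idempotent. Multiplying $c$ and $c^*$ yields $cc^*=e_1\cdots e_n e_n^*\cdots e_1^*$, and the maps $vL_K(E)\to cc^*L_K(E)$, $x\mapsto cx$ and $cc^*L_K(E)\to vL_K(E)$, $y\mapsto c^*y$ are mutually inverse right-module isomorphisms (using $c^*c=r(c)=v$). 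If one further exits the cycle inductively one shows $vL_K(E)$ is isomorphic to a proper direct summand of itself, so $v$ is an infinite idempotent.

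Finally, promote this to every non-zero right ideal. Given a non-zero right ideal $J$, take $0\ne x\in J$. By simplicity the two-sided ideal $L_K(E)xL_K(E)$ equals $L_K(E)$, so one can find $a,b\in L_K(E)$ with $axb=v$; then $ybxa$ is a non-zero element of $xL_K(E)\subseteq J$ (for a suitable choice of $y$), and a standard manipulation produces an idempotent $q\in J$ Murray--von Neumann equivalent to $v$, hence infinite. The main obstacle I expect is precisely this last transfer step: showing that simplicity plus one concrete infinite idempotent suffices to locate an infinite idempotent inside an arbitrary non-zero right ideal. The technical heart is a careful application of normal-form manipulations in $L_K(E)$ (reducing to monomials in real and ghost edges) to ensure the equivalence $q\sim v$ is realized by elements that stay inside $J$.
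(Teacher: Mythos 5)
The paper does not prove this statement at all: it is imported verbatim as \cite[Theorem 3.1.10]{aam:lpa} and used as a black box, so there is no in-paper proof to compare against. Judged on its own, your outline follows the standard route (the one in \cite{ap:pislpa06} and the Leavitt path algebra book), but two steps have genuine gaps. First, in the backward direction you assert that the exit forces $|s^{-1}(v)|\ge 2$ at the base $v=s(c)$ and that $v$ is regular, so that $v=e_1e_1^*+p$ via relation (4). Neither is guaranteed: the exit $f$ satisfies $s(f)=s(e_i)$ for \emph{some} $i$, not necessarily $i=1$, and the vertex $s(e_i)$ may be an infinite emitter, in which case relation (4) is unavailable there. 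The correct move is to show directly that $cc^*\lneq v$ (if $cc^*=v$, conjugating by initial segments of $c$ forces $(e_i\cdots e_n)(e_i\cdots e_n)^*=s(e_i)$ for every $i$, which the exit $f$ contradicts since $f^*e_ie_i^*f=0$ but $f^*s(e_i)f=r(f)\neq 0$); then $vL_K(E)\cong cc^*L_K(E)$ via $x\mapsto cx$, $y\mapsto c^*y$ exhibits $vL_K(E)$ as isomorphic to a proper direct summand of itself.

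Second, the transfer to an arbitrary nonzero right ideal $J$ is where the real content lies, and your sketch omits the two ingredients that make it work. You need (i) the Reduction Lemma: for every nonzero $x\in L_K(E)$ there exist \emph{monomials} $\alpha,\beta$ in real and ghost edges with $\alpha x\beta=kw$ for a nonzero scalar $k$ and a vertex $w$ (the alternative output, a nonzero polynomial in a cycle without exit, is excluded by condition (2)); and (ii) the fact that \emph{every} vertex is an infinite idempotent, not just the base of your chosen cycle, which requires condition (1): cofinality connects $w$ by a path $p$ to a cycle with an exit, and $pp^*\le w$ with $pp^*L_K(E)\cong r(p)L_K(E)$ shows $w$ inherits infiniteness. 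With (i) in hand, after normalizing $\alpha=w\alpha$ the element $q:=k^{-1}x\beta\alpha$ lies in $xL_K(E)\subseteq J$, is idempotent, and is Murray--von Neumann equivalent to $w$, hence infinite. Your phrase ``find $a,b$ with $axb=v$'' by simplicity alone does not yield an idempotent inside $J$, because $a$ and $b$ produced from $L_K(E)xL_K(E)=L_K(E)$ are sums of products and the resulting $xba$ need not be idempotent; the monomial form from the Reduction Lemma is what makes the manipulation close up.
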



We are now in a position to give the main result of this subsection, which provides criteria for Leavitt path algebras of Hopf graphs to be purely infinite simple via ramification datas which plays an important role in the proof of Theorem \ref{stablerank} below. 

\begin{thm}\label{purely}
Let $G$ be a group with a ramification data $\mathfrak{r}=\sum_{C\in \mathcal{C}}\mathfrak{r}_CC$ and $K$ an arbitrary field. Then $L_K(\Gamma_{G, \mathfrak{r}})$ is purely infinite simple if and only if $S_{G, \mathfrak{r}} =G$ and
$\sum_{C\in \mathcal{C}}\mathfrak{r}_C|C|\geq 2$.
\end{thm}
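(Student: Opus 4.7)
The plan is to reduce the problem to the three conditions of Theorem \ref{pis-lpas} and verify them using Theorem \ref{notconnected} together with Proposition \ref{common}.

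\textbf{Sufficiency.} Assume $S_{G,\mathfrak{r}} = G$ and $\sum_{C} \mathfrak{r}_C |C| \geq 2$. Since $S_{G,\mathfrak{r}} = G$ is trivially a subgroup of $G$, Theorem \ref{notconnected}(3) gives that $\Delta_{G,\mathfrak{r}} = \Gamma_{G,\mathfrak{r}}$ is strongly connected. For condition (1) of Theorem \ref{pis-lpas}, any nonempty hereditary $H \subseteq G$ contains some $v$, and strong connectivity together with hereditarity forces every $w \in G$ to lie in $H$, so $H = G$. For (2), Proposition \ref{common}(2) gives each vertex at least $\sum_C \mathfrak{r}_C|C| \geq 2$ outgoing edges, so every cycle has an exit. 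For (3), $1_G \in S_{G,\mathfrak{r}} = G$ makes $S_{G,\mathfrak{r}}$ a submonoid, so Proposition \ref{common}(6) yields a cycle.

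\textbf{Necessity.} Assume $L_K(\Gamma_{G,\mathfrak{r}})$ is purely infinite simple. Theorem \ref{notconnected}(5) gives $L_K(\Gamma_{G,\mathfrak{r}}) \cong L_K(\Delta_{G,\mathfrak{r}})^{(G/\Delta^0_{G,\mathfrak{r}})}$, and because an algebra that splits as a direct sum of two or more nonzero summands (each a two-sided ideal) cannot be simple, we must have $|G/\Delta^0_{G,\mathfrak{r}}| = 1$, i.e.\ $G = \langle S_{G,\mathfrak{r}}\rangle$. Condition (3) of Theorem \ref{pis-lpas} forces a cycle in $\Gamma_{G,\mathfrak{r}}$; by Proposition \ref{common}(6), $S_{G,\mathfrak{r}}$ is a submonoid, so $1_G = d_1\cdots d_m$ with each $d_i \in \bigcup_{C \in \text{supp}(\mathfrak{r})} C$, giving $d_m^{-1} = d_1\cdots d_{m-1} \in S_{G,\mathfrak{r}}$. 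To upgrade $\langle S_{G,\mathfrak{r}}\rangle = G$ to the equality $S_{G,\mathfrak{r}} = G$, I set $H := S_{G,\mathfrak{r}}$; by Proposition \ref{common}(1) this is precisely the set of vertices reachable from $1_G$ via real paths, hence hereditary. Saturation is vacuous when no vertex is regular; otherwise, if $v$ is regular with $vc \in H$ for all $c \in \bigcup_{C \in \text{supp}(\mathfrak{r})} C$, specialising to $c = d_m$ gives $vd_m \in S_{G,\mathfrak{r}}$ and hence $v = (vd_m)d_m^{-1} \in S_{G,\mathfrak{r}} = H$. Since $1_G \in H$ and simplicity allows only $\varnothing$ and $G$ as hereditary saturated subsets, $H = G$, i.e.\ $S_{G,\mathfrak{r}} = G$. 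The inequality $\sum_C \mathfrak{r}_C|C| \geq 2$ follows by ruling out $\sum = 0$ (then $\mathfrak{r} = 0$ and $\Gamma_{G,\mathfrak{r}}$ has no cycles) and $\sum = 1$ (together with $S_{G,\mathfrak{r}} = G$ a subgroup, Corollary \ref{singlecycle}(2) then gives $L_K(\Gamma_{G,\mathfrak{r}}) \cong M_{|G|}(K[x,x^{-1}])$, which is not simple since $K[x,x^{-1}]$ is not).

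The step I expect to require the most care is the saturation argument in the necessity direction, since the direct-sum decomposition in Theorem \ref{notconnected}(5) only yields $\langle S_{G,\mathfrak{r}}\rangle = G$, and upgrading this to the much stronger equality $S_{G,\mathfrak{r}} = G$ crucially exploits that a factorisation $1_G = d_1\cdots d_m$ inside $S_{G,\mathfrak{r}}$ supplies an inverse of a specific generator that can be absorbed back into $S_{G,\mathfrak{r}}$ to close up the saturation condition.
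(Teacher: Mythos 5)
Your proof is correct, and while the overall skeleton (reduction to Theorem \ref{pis-lpas}, the direct-sum decomposition of Theorem \ref{notconnected}(5) forcing $\langle S_{G,\mathfrak{r}}\rangle = G$, and the exclusion of $\sum_C \mathfrak{r}_C|C| \leq 1$ via Corollary \ref{singlecycle}) matches the paper's, you handle the one genuinely delicate step --- upgrading $\langle S_{G,\mathfrak{r}}\rangle = G$ to $S_{G,\mathfrak{r}} = G$ --- by a different route. The paper argues via cofinality: for a generator $d \neq 1_G$ it takes the edge from $d$ to $d^2$, uses cofinality of the simple algebra's graph to produce a path from $d^2$ back to a cycle based at $d$, and reads off from the resulting closed path that $d$ is invertible inside $S_{G,\mathfrak{r}}$, so that $S_{G,\mathfrak{r}}$ is a subgroup and hence equals $\langle S_{G,\mathfrak{r}}\rangle = G$. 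You instead observe that $S_{G,\mathfrak{r}}$ itself (the tree of $1_G$, which contains $1_G$ once the existence of a cycle makes $S_{G,\mathfrak{r}}$ a submonoid) is hereditary, and is saturated because a factorisation $1_G = d_1\cdots d_m$ lets you absorb $d_m^{-1} = d_1\cdots d_{m-1}$ back into $S_{G,\mathfrak{r}}$ (with the degenerate case $m=1$, i.e.\ $d_m = 1_G$, and the non-row-finite case, where saturation is vacuous, both handled); simplicity then forces $S_{G,\mathfrak{r}} = G$ directly. Both arguments ultimately lean on the same structural consequence of simplicity (cofinality being equivalent to the triviality of hereditary saturated subsets), but yours avoids the explicit path-chasing and is arguably cleaner; the paper's version has the minor merit of exhibiting concretely which relation $dc_1\cdots c_k w_i \cdots w_n = 1_G$ witnesses the invertibility of each generator. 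The sufficiency direction and the exclusion of the cases $\sum_C \mathfrak{r}_C|C| \in \{0,1\}$ are essentially identical to the paper's.
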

\begin{proof}
$(\Longrightarrow)$. Assume that $L_K(\Gamma_{G, \mathfrak{r}})$ is purely infinite simple.  By Theorem \ref{pis-lpas}, $L_K(\Gamma_{G, \mathfrak{r}})$ has a cycle, and so, $S_{G, \mathfrak{r}}$ is a submonoid of $G$, by Proposition \ref{common} (6). 
If $\mathfrak{r}=0$, then by Proposition \ref{common} (4),  $\Gamma_{G, \mathfrak{r}}$ is a disjoint union of isolated vertices, a contradiction. Consider the case $\sum_{C\in \mathcal{C}}\mathfrak{r}_C|C|=1$. Since $S_{G, \mathfrak{r}}$ is a submonoid of $G$ and by Corollary \ref{singlecycle} (2), $L_K(\Gamma_{G, \mathfrak{r}}) =M_{|S_{G, \mathfrak{r}}|}(K[x, x^{-1}])^{(G/\Delta_{G, \mathfrak{r}}^0)}$, and so, $L_K(\Gamma_{G, \mathfrak{r}})$ is not simple, a contradiction. Therefore, we have $\sum_{C\in \mathcal{C}}\mathfrak{r}_C|C|\geq 2.$

If $\langle S_{G, \mathfrak{r}} \rangle \neq G$, then by Theorem \ref{notconnected}, we have $L_K(\Gamma_{G, \mathfrak{r}})= L_K(\Gamma_{G, \mathfrak{r}})^{(G/\langle S_{G, \mathfrak{r}} \rangle)}$, and so, $L_K(\Gamma_{G, \mathfrak{r}})$ is not simple, a contradiction. This implies that $\langle S_{G, \mathfrak{r}} \rangle=G$. 	

We claim that $S_{G, \mathfrak{r}}$ is a group. Indeed, if $S_{G, \mathfrak{r}} = \{1_G\}$, then the claim is obvious. Consider the case when $S_{G, \mathfrak{r}} \neq \{1_G\}$.
Let $d\in \bigcup_{C\in \rm{supp}(\mathfrak{r})}C$ and $d\neq 1_G$. Then, there exists an edge $e$ in $\Gamma_{G, \mathfrak{r}}$ such that $s(e)=d$ and $r(e)=d^2$. Since $S_{G, \mathfrak{r}}$ is a submonoid of $G$, there exists elements $\{w_i\}_{i=1}^{n}\subseteq \bigcup_{C\in \rm{supp}(\mathfrak{r})}C$ such that $w_1\cdots w_n=1_G$, and so, there exists a cycle $\alpha=e_1e_2\cdots e_n$ in $\Gamma_{G, \mathfrak{r}}$ such that $d=s(e_1)$ and $r(e_i)=s(e_i)w_i$ for all $1\le i\le n$. Since $L_K(\Gamma_{G, \mathfrak{r}})$ is simple and by \cite[Theorem 2.9.7]{aam:lpa}, $\Gamma_{G, \mathfrak{r}}$ is cofinal. Then, there exists a path $p=f_1\cdots f_k$ in $\Gamma_{G, \mathfrak{r}}$ which starts at $d^2$ and ends in $\alpha$. Write $r(f_i)=s(f_i)c_i$ for all $1\le i\le k$, where $c_i\in \bigcup_{C\in \rm{supp}(\mathfrak{r})}C$. Since $p$ ends in $\alpha$, $r(p)=s(e_i)$ for some $1\le i\le n$. Let $\beta :=ef_1\cdots f_ke_ie_{i+1}\cdots e_n$. We then have that $\beta$ is a closed path in $\Gamma_{G, \mathfrak{r}}$ based at $d^2$, and so, $s(e)=d=r(e_n)=s(e)dc_1\cdots c_kw_i\cdots w_n$ and $dc_1\cdots c_kw_i\cdots w_n=1_G$. This implies that $d$ is invertible in $S_{G, \mathfrak{r}}$, and hence $S_{G, \mathfrak{r}}$ is a subgroup of $G$, proving the claim. Then, since 
$\langle S_{G, \mathfrak{r}} \rangle=G$, we obtain that $ S_{G, \mathfrak{r}} =G$.

$(\Longleftarrow)$. Since $S_{G, \mathfrak{r}} =G$ and by Theorem \ref{notconnected} and Proposition \ref{common} (6), $\Gamma_{G, \mathfrak{r}}$ is both strongly connected and has a cycle. It implies that $\Gamma_{G, \mathfrak{r}}$ is cofinal, and so, the only hereditary and saturated subset of $\Gamma^0_{G, \mathfrak{r}}$ are $\varnothing$ and $\Gamma^0_{G, \mathfrak{r}}$.  
 Since $\sum_{C\in \mathcal{C}}\mathfrak{r}_C|C|\geq 2$, every cycle in $\Gamma_{G, \mathfrak{r}}$ has an exit. Then, by Theorem \ref{pis-lpas}, $L_K(\Gamma_{G, \mathfrak{r}})$ is purely infinite simple, thus finishing the proof.
\end{proof}

Consequently, we obtain the following corollary which extends \cite[Proposition 4.1]{np:tsolpaatibnp} and \cite[Theorem 3.1]{moh:lpaowcg} to Hopf graphs.

\begin{cor}\label{purelyfinite}
Let $G$ be a finite group with  a ramification data $\mathfrak{r}=\sum_{C\in \mathcal{C}}\mathfrak{r}_CC\neq 0$ and $K$ an arbitrary field. Then the following statements are equivalent:
	
$(1)$ $L_K(\Gamma_{G, \mathfrak{r}})$ is purely infinite simple;
	
$(2)$ $L_K(\Gamma_{G, \mathfrak{r}})$ is simple;
	
$(3)$ $S_{G, \mathfrak{r}}=G$ and $\sum_{C\in \mathcal{C}}\mathfrak{r}_C|C|\geq 2$.
\end{cor}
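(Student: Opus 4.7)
The plan is to establish the cyclic chain of implications $(1) \Rightarrow (2) \Rightarrow (3) \Rightarrow (1)$, where the first implication is trivial and the last one follows directly from Theorem \ref{purely}; so the only real content is $(2) \Rightarrow (3)$.

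For $(2) \Rightarrow (3)$, I would start by showing that $S_{G, \mathfrak{r}} = G$. First, since $G$ is finite, the subsemigroup $S_{G, \mathfrak{r}}$ is a finite subsemigroup of a group, hence in fact a subgroup of $G$; consequently $S_{G, \mathfrak{r}} = \langle S_{G, \mathfrak{r}}\rangle = \Delta^0_{G, \mathfrak{r}}$. Now apply Theorem \ref{notconnected}(5): we have
\[
L_K(\Gamma_{G, \mathfrak{r}}) \cong L_K(\Delta_{G, \mathfrak{r}})^{(G/\Delta^0_{G, \mathfrak{r}})}.
\]
If the index $|G/\Delta^0_{G, \mathfrak{r}}|$ were at least $2$, then $L_K(\Gamma_{G, \mathfrak{r}})$ would split as a direct sum of two or more nonzero ideals, contradicting simplicity. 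Hence $\Delta^0_{G, \mathfrak{r}} = G$, and combined with the previous observation this yields $S_{G, \mathfrak{r}} = G$.

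It remains to rule out the case $\sum_{C\in\mathcal{C}}\mathfrak{r}_C|C| = 1$. Since $S_{G, \mathfrak{r}} = G$ is a submonoid of $G$, if this sum equals $1$ then Corollary \ref{singlecycle}(2) gives
\[
L_K(\Gamma_{G, \mathfrak{r}}) \cong M_{|S_{G, \mathfrak{r}}|}\bigl(K[x, x^{-1}]\bigr)^{(G/S_{G, \mathfrak{r}})} = M_{|G|}\bigl(K[x, x^{-1}]\bigr),
\]
which is not simple because $K[x, x^{-1}]$ admits nontrivial (two-sided) ideals. This contradiction forces $\sum_{C\in\mathcal{C}}\mathfrak{r}_C|C| \geq 2$, completing $(2) \Rightarrow (3)$.

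I do not anticipate a significant obstacle here; the main subtlety is simply recognizing that the finiteness of $G$ (and of $S_{G, \mathfrak{r}}$) collapses several distinctions that were delicate in Theorem \ref{purely} (where, for general $G$, one had to argue via cofinality and the simplicity criterion \cite[Theorem 2.9.7]{aam:lpa} that $S_{G, \mathfrak{r}}$ is a group under the sole assumption of simplicity). In the finite setting this is automatic, so once the direct-sum decomposition of Theorem \ref{notconnected} and the explicit form in Corollary \ref{singlecycle}(2) are invoked, everything follows mechanically and the full three-way equivalence with purely infinite simplicity drops out.
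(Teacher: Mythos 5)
Your proof is correct. It differs from the paper's in how the gap between simplicity and purely infinite simplicity is closed. The paper proves $(1)\Leftrightarrow(3)$ directly from Theorem \ref{purely} and then gets $(1)\Leftrightarrow(2)$ by quoting the general fact (Theorem \ref{pis-lpas} combined with the simplicity criterion \cite[Theorem 2.9.1]{aam:lpa}) that $L_K(E)$ is purely infinite simple iff it is simple and $E$ has a cycle, observing that for finite $G$ with $\mathfrak{r}\neq 0$ the semigroup $S_{G,\mathfrak{r}}$ is automatically a group, so $\Gamma_{G,\mathfrak{r}}$ always has a cycle by Proposition \ref{common}(6). You instead run the cycle $(1)\Rightarrow(2)\Rightarrow(3)\Rightarrow(1)$ and prove $(2)\Rightarrow(3)$ concretely: simplicity forbids the direct-sum decomposition of Theorem \ref{notconnected}(5) from having more than one summand, forcing $S_{G,\mathfrak{r}}=\Delta^0_{G,\mathfrak{r}}=G$, and the case $\sum_C\mathfrak{r}_C|C|=1$ is excluded because Corollary \ref{singlecycle}(2) then identifies the algebra with $M_{|G|}(K[x,x^{-1}])$, which is not simple. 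Your route avoids the external simplicity criterion of \cite{aam:lpa} and uses only results already established in the paper (plus the elementary fact about ideals of matrix rings over $K[x,x^{-1}]$), at the cost of being slightly longer; the paper's route is shorter but leans on a quoted black box. Both hinge on the finiteness of $G$ through the same observation that a nonempty finite subsemigroup of a group is a subgroup.
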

\begin{proof} The equivalence of (1) and (3) immediately follows from Theorem \ref{purely}. We note that by Theorem \ref{pis-lpas} and \cite[Theorem 2.9.1]{aam:lpa}, the Leavitt path algebra $L_K(E)$ of an arbitrary graph $E$ is purely infinite simple if and only if $L_K(E)$ is simple and $E$ has a cycle. Since $G$ is finite, $S_{G, \mathfrak{r}}$ is a subgroup of $G$, and so, $\Gamma_{G, \mathfrak{r}}$ has a cycle by Proposition \ref{common} (6). These observations show the equivalence of (1) and (2), thus finishing the proof.
\end{proof}


\subsection{Stable rank}
Let $S$ be a unital ring containing an associate ring $R$ as a two-sided ideal. Following \cite{vas:tsroratdots}, a vector $(a_i)_{i=1}^n$  in $S$ is called $R$-\emph{unimodular} if $a_1-1; a_i \in R$ for $i>1$ and there exists $b_1-1; b_i \in R\, (i > 1)$ such that $\sum_{i=1}^n a_ib_i = 1$. We denote by $\text{sr}(R)$ the \emph{stable rank} of $R$, which is the least number $m$ for which for any $R$-unimodular vector	$(a_i)_{i=1}^m+1$ there exists $r_i \in R$ such that the vector $(a_i + r_ia_{m+1})_{i=1}^m$ is $R$-unimodular. If such	an $m$ does not exist, the stable rank of $R$ is defined to be infinite. 

In \cite[Theorem 2.8]{AP:srolpa} Ara and Pardo showed that the only possible values for the stable rank of the Leavitt path algebra of a row-finite graphs are $1$, $2$ and $\infty$. In \cite[Theorem 4.7]{lr:srolpafag} Larki and Riazi extended this to an arbitrary graph. In the following theorem, by using \cite[Theorem 4.7]{lr:srolpafag} and Theorems \ref{notconnected} and \ref{purely}, we compute the stable rank of Leavitt path algebras of Hopf graphs via ramification datas.

\begin{thm}\label{stablerank}
Let $G$ be a group with  a ramification data $\mathfrak{r}=\sum_{C\in \mathcal{C}}\mathfrak{r}_CC$ and $K$ an arbitrary field. Then
\begin{align*} \rm{sr}(L_K(\Gamma_{G, \mathfrak{r}}))= \begin{cases} 1&\textnormal{if }  S_{G,\mathfrak{r}} \textnormal{ is not a submonoid of } G\\ 
			\infty&\textnormal{if }  \sum_{C\in \mathcal{C}}\mathfrak{r}_C|C|\geq 2 \textnormal{ and } S_{G,\mathfrak{r}} \textnormal{ is a finite subgroup of } G\\
			2&\textnormal{otherwise}.\end{cases}\end{align*}
\end{thm}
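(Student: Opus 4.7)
The strategy combines the decomposition $L_K(\Gamma_{G,\mathfrak{r}})\cong L_K(\Delta_{G,\mathfrak{r}})^{(G/\Delta_{G,\mathfrak{r}}^0)}$ from Theorem~\ref{notconnected}(5) with the Larki--Riazi trichotomy \cite[Theorem 4.7]{lr:srolpafag}, according to which $\text{sr}(L_K(E))\in\{1,2,\infty\}$ with each value determined by explicit graph-theoretic criteria on $E$. A direct check shows that these criteria for a disjoint union of identical connected components coincide with those for a single component (acyclicity, existence of cycles with exits, and existence of a finite hereditary saturated subset generating a unital purely infinite simple corner are all preserved), so $\text{sr}(L_K(\Gamma_{G,\mathfrak{r}}))=\text{sr}(L_K(\Delta_{G,\mathfrak{r}}))$, reducing the problem to analyzing $\Delta_{G,\mathfrak{r}}$ in each of the three regimes.

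The case $S_{G,\mathfrak{r}}$ not a submonoid of $G$ is immediate: by Proposition~\ref{common}(6), $\Delta_{G,\mathfrak{r}}$ is acyclic, so Larki--Riazi gives $\text{sr}=1$. For the regime $\sum_C\mathfrak{r}_C|C|\geq 2$ with $S_{G,\mathfrak{r}}$ a finite subgroup, I would observe that $\Delta_{G,\mathfrak{r}}^0=S_{G,\mathfrak{r}}$ is finite and that $\Delta_{G,\mathfrak{r}}$ is isomorphic to the Hopf graph $\Gamma_{S_{G,\mathfrak{r}},\mathfrak{r}'}$, where $\mathfrak{r}'$ is obtained by splitting each $G$-conjugacy class in $\text{supp}(\mathfrak{r})$ into its $S_{G,\mathfrak{r}}$-conjugacy classes while keeping the multiplicity $\mathfrak{r}_C$ on each piece; then $S_{S_{G,\mathfrak{r}},\mathfrak{r}'}=S_{G,\mathfrak{r}}$ and the total edge-count at each vertex is preserved. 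Theorem~\ref{purely} then yields that $L_K(\Delta_{G,\mathfrak{r}})$ is purely infinite simple, and finiteness of $\Delta_{G,\mathfrak{r}}$ makes it unital PIS, so $\text{sr}=\infty$.

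For the ``otherwise'' case I split further. If $\sum_C\mathfrak{r}_C|C|=1$ with $S_{G,\mathfrak{r}}$ a submonoid, Corollary~\ref{singlecycle}(2) gives $L_K(\Gamma_{G,\mathfrak{r}})\cong M_{|S_{G,\mathfrak{r}}|}(K[x,x^{-1}])^{(G/S_{G,\mathfrak{r}})}$; since $\text{sr}(K[x,x^{-1}])=2$ and matrix amplification preserves this value, $\text{sr}=2$. The delicate sub-case, and the \emph{main obstacle}, is when $\sum\geq 2$, $S_{G,\mathfrak{r}}$ is a submonoid, but not a finite subgroup (equivalently, $S_{G,\mathfrak{r}}$ is infinite, since a finite submonoid of a group is automatically a subgroup). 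Here $\Delta_{G,\mathfrak{r}}$ has cycles by Proposition~\ref{common}(6), so $\text{sr}\neq 1$, and the real task is to exclude $\text{sr}=\infty$. The Larki--Riazi criterion for $\text{sr}=\infty$ demands a finite nonempty hereditary saturated $H\subseteq\Delta_{G,\mathfrak{r}}^0$ whose associated corner is unital purely infinite simple. I would translate hereditariness of $H$ into closure under right-translation by $\bigcup_{C\in\text{supp}(\mathfrak{r})}C$, then use that distinct elements of $S_{G,\mathfrak{r}}$ produce distinct right translates of a given vertex to conclude that for any $v\in H$ the orbit $v\cdot S_{G,\mathfrak{r}}\subseteq H$ is as large as $S_{G,\mathfrak{r}}$ itself, hence infinite. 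Together with the constancy $|s^{-1}(v)|=\sum_C\mathfrak{r}_C|C|$ from Proposition~\ref{common}(2), this forces every nonempty hereditary saturated subset of $\Delta_{G,\mathfrak{r}}^0$ to be infinite, precluding $\text{sr}=\infty$ and leaving $\text{sr}=2$.
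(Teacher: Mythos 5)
Your treatment of the first two regimes and of the $\sum_{C}\mathfrak{r}_C|C|=1$ sub-case matches the paper's proof (and your explicit identification of $\Delta_{G,\mathfrak{r}}$ with a Hopf graph $\Gamma_{S_{G,\mathfrak{r}},\mathfrak{r}'}$ is a useful justification for applying Theorem \ref{purely} to $\Delta_{G,\mathfrak{r}}$, a step the paper leaves implicit). The gap is in the sub-case you yourself single out as the main obstacle, and it comes from a misstatement of the Larki--Riazi criterion. The condition for $\mathrm{sr}=\infty$ is the existence of a hereditary saturated $H\subseteq \Delta_{G,\mathfrak{r}}^0$ such that the \emph{quotient} graph $\Delta_{G,\mathfrak{r}}\setminus H$, whose vertex set is the \emph{complement} $\Delta_{G,\mathfrak{r}}^0\setminus H$, gives a unital purely infinite simple algebra; unitality therefore forces $\Delta_{G,\mathfrak{r}}^0\setminus H$ to be finite, not $H$. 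Your orbit argument ($v\in H$ implies $vS_{G,\mathfrak{r}}\subseteq H$, hence $H$ infinite) rules out finite nonempty hereditary saturated sets, which is not what is needed: a graph can have every nonempty hereditary saturated subset infinite and still have stable rank $\infty$. For instance, take a vertex $w$ with two loops and an edge into an infinite tail $u_1\to u_2\to\cdots$; then $H=\{u_1,u_2,\dots\}$ is infinite, hereditary and saturated, and the quotient is $L_K(1,2)$, so the stable rank is $\infty$ even though no nonempty hereditary saturated subset is finite.

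The repair is the dual of your orbit argument, and it is what the paper does: assume $L_K(\Delta_{G,\mathfrak{r}})$ has a unital purely infinite simple quotient $L_K(\Delta_{G,\mathfrak{r}}\setminus H)$, pick $v\notin H$, and use that $S_{G,\mathfrak{r}}$ is an infinite submonoid which is not a group to find $c\in S_{G,\mathfrak{r}}$ with $wc\neq 1_G$ for all $w\in S_{G,\mathfrak{r}}$; then the predecessors $vc^{-k}$, $k\geq 0$, are pairwise distinct, satisfy $vc^{-k}\geq v$, and hence all lie outside $H$ by hereditariness, so $\Delta_{G,\mathfrak{r}}^0\setminus H$ is infinite, contradicting unitality of the quotient. (In the remaining sub-case where $S_{G,\mathfrak{r}}$ is an infinite subgroup, the paper instead observes via Theorem \ref{purely} that $L_K(\Delta_{G,\mathfrak{r}})$ is itself non-unital purely infinite simple, so its only nonzero quotient is itself and is non-unital.) Until the excluding-$\infty$ step is rewritten in terms of the complement of $H$, your proof of the ``otherwise'' value $2$ is incomplete.
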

\begin{proof} 	
If $S_{G,\mathfrak{r}}$ is not a submonoid of $G$, then  $\Gamma_{G, \mathfrak{r}}$ is acyclic, by Proposition \ref{common} (6). By  \cite[Theorem 4.7]{lr:srolpafag} (1), $\text{sr}(L_K(\Gamma_{G, \mathfrak{r}}))=1$. If $\sum_{C\in \mathcal{C}}\mathfrak{r}_C|C|\geq 2$ and $S_{G,\mathfrak{r}}$ is a finite subgroup of $G$, then by Theorem \ref{purely}, $L_K(\Delta_{G,\mathfrak{r}})$ is both a unital purely infinite simple ring and a quotient of $L_K(\Gamma_{G,\mathfrak{r}})$. By \cite[Theorem 4.7]{lr:srolpafag} (2), $\text{sr}(L_K(\Gamma_{G,\mathfrak{r}}))=\infty$. Otherwise, we have the following cases:
	
\emph{Case} 1: $S_{G,\mathfrak{r}}$ is a submonoid of $G$ and $\sum_{C\in \mathcal{C}}\mathfrak{r}_C|C|\leq 1$. We then have that $\Gamma_{G,\mathfrak{r}}$ has a cycle, by Proposition \ref{common} (6). By Corollary \ref{singlecycle}, $\Gamma_{G,\mathfrak{r}}$ is a disjoint union of single cycles, and so, $\text{sr}(L_K(\Gamma_{G,\mathfrak{r}}))=2$, by \cite[Theorem 4.7]{lr:srolpafag} (3).
	
\emph{Case} 2: $S_{G,\mathfrak{r}}$ is an infinite subgroup of $G$ and $\sum_{C\in \mathcal{C}}\mathfrak{r}_C|C|\geq 2$. Then, by Theorem \ref{purely}, $L_K(\Delta_{G,\mathfrak{r}})$ is non-unital purely infinite simple. This implies that $L_K(\Gamma_{G,\mathfrak{r}})$ has no unital purely infinite simple quotients, and so, $\text{sr}(L_K(\Gamma_{G,\mathfrak{r}}))=2$, by \cite[Theorem 4.7]{lr:srolpafag} (3).
	
\emph{Case} 3: $S_{G,\mathfrak{r}}$ is an infinite submonoid of $G$, but not a group  and $\sum_{C\in \mathcal{C}}\mathfrak{r}_C|C|\geq 2$. Then, since  $S_{G,\mathfrak{r}}$ is a submonoid of $G$ and by Proposition \ref{common} (6), $\Gamma_{G,\mathfrak{r}}$ contains a cycle. We next claim that $L_K(\Gamma_{G,\mathfrak{r}})$ has no unital purely infinite simple quotients. Indeed, by Theorem \ref{notconnected}, it is enough to show that $L_K(\Delta_{G,\mathfrak{r}})$ has no unital purely infinite simple quotients. Assume that $L_K(\Delta_{G,\mathfrak{r}})$ has a unital purely infinite simple quotient. By \cite[Proposition 3.3]{lr:srolpafag}, there exists a hereditary and saturated subset $H$ of $\Delta_{G,\mathfrak{r}}^0$ such that $L_K(\Delta_{G,\mathfrak{r}}\setminus H)$ is unital purely infinite simple, where $\Delta_{G,\mathfrak{r}}\setminus H$ is the quotient graph defined by:
\begin{center}
$(\Delta_{G,\mathfrak{r}}\setminus H)^0 = \Delta_{G,\mathfrak{r}}^0\setminus H$ and $(\Delta_{G,\mathfrak{r}}\setminus H)^1 = \{e\in \Delta_{G,\mathfrak{r}}^1\mid r(e)\notin H\}$.	
\end{center} 
In particular, we receive that $\Delta_{G,\mathfrak{r}}^0\setminus H$ is a nonempty finite set. Let $v\in \Delta_{G,\mathfrak{r}}^0\setminus H$. Since $S_{G,\mathfrak{r}}$ is not a group, there exists an element $c\in S_{G,\mathfrak{r}}$ such that $wc\neq 1_G$ for all $w\in S_{G,\mathfrak{r}}$. We then have that $\{vc^{-k}\}^{\infty}_{k=0}$ are distinct vertices in $\Delta_{G,\mathfrak{r}}$, where $c^0 := 1_G$. Since $vc^{-k} =(vc^{-k-1})c$, for each $k\ge 0$, there exists an edge $e_k$ in $\Delta_{G,\mathfrak{r}}$ such that 
$s(e_k) = vc^{-k-1}$ and $r(e_k) = vc^{-k}$, and so, there exists a path in $\Delta_{G,\mathfrak{r}}$ from $vc^{-k}$ to $v$ for all $k\ge 1$. Since $H$ is hereditary and $v\notin H$, $vc^{-k}\notin H$ for all $k$. This shows that $\Delta_{G,\mathfrak{r}}^0\setminus H$ is an infinite set, a contradiction, proving the claim. From these observations and \cite[Theorem 4.7 (3)]{lr:srolpafag}, we immediately obtain that  
$\text{sr}(L_K(\Gamma_{G,\mathfrak{r}}))=2$, thus finishing the proof.
\end{proof}

We end this subsection by presenting the following example.

\begin{exas} 
	
	(1)  Consider the Hopf graph $\Gamma_{G, \mathfrak{r}}$ of Example \ref{2.3} (1). We then have that $\sum_{C\in \mathcal{C}}\mathfrak{r}_C|C|=2$ and $S_{G, \mathfrak{r}}=\{id, (123), (132)\}$ is a finite subgroup of $G=S_3$, and so $\text{sr}(L_K(\Gamma_{G, \mathfrak{r}}))=\infty$, by Theorem \ref{stablerank}. 
	
	(2) Consider the Hopf graph $\Gamma_{G, \mathfrak{r}}$ of Example \ref{2.4} (1). We then have that $\sum_{C\in \mathcal{C}}\mathfrak{r}_C|C|=2$ and $S_{G, \mathfrak{r}}=\{0, 2n\mid n\geq 1\}$ is an infinite submonoid of $G=\mathbb{Z}$, and so  $\text{sr}(L_K(\Gamma_{G, \mathfrak{r}}))=2$, by Theorem \ref{stablerank}.
	
	We note that Leavitt path algebras of these graphs have infinite Gelfand-Kirillov dimension (Example \ref{exGK}).
	
	(3)  Consider the Hopf graph $\Gamma_{G, \mathfrak{r}}$ of Example \ref{2.4} (2). We then have that $S_{G, \mathfrak{r}}=\{2m+3n\mid m,n\geq 1\}$ is not a submonoid of $G=\mathbb{Z}$, and so $\text{sr}(L_K(\Gamma_{G, \mathfrak{r}}))=1$, by Theorem \ref{stablerank}.	
\end{exas}

%

\subsection{Finite dimensional representations}
We begin this subsection by recalling notions of graph theory introduced in \cite{ko:rolpa, ko:fdrolpa}. Let $E$ be a graph. For a cycle $c$ and a sink $v$ in $E$, we write $c\Rightarrow v$ if there exists a path in $E$ which starts in $c$ and ends at $v$. A sink $v$ in $E$ is called {\it maximal} if there is not any cycle $c$ in $E$ such that $c\Rightarrow v$. A cycle $c$ in $E$ is called {\it maximal} if there is not any cycle $d$ in $E$ which is different from a cyclic permutation of $c$ such that $d\Rightarrow c$. The {\it predecessors} of a vertex $v$ in $E$ is the set $E_{\ge v} := \{w\in E^0\mid w\ge v\}$ and the {\it predecessors} of a cycle $c$ in $E$ is the set $E_{\ge v}$, where $v$ is an arbitrary vertex on $c$.

In \cite[Theorem 6.5]{ko:rolpa} Ko\c{c} and \"{O}zaydin proved that the Leavitt path algebra $L_K(E)$ of a row-finite graphs $E$ with coefficients over a field $K$ has a nonzero finite dimensional representations if and only if $E$ has a maximal sink or cycle with finitely many predecessors. Moreover, they have classified all finite dimensional representations of Leavitt path algebras of row-finite graphs (see \cite[Theorem 4.7]{ko:fdrolpa}). In the following theorem, based essentially on these results, we classify all finite dimensional representations of Leavitt path algebras of Hopf graphs via ramification datas.

\begin{thm}\label{fdm}
Let $G$ be a group with  a finite support ramification data $\mathfrak{r}=\sum_{C\in \mathcal{C}}\mathfrak{r}_CC$ and $K$ an arbitrary field. Then $L_K(\Gamma_{G, \mathfrak{r}})$ has a nonzero finite dimensional representation if and only if one of the following conditions holds:
	
$(1)$ $\mathfrak{r}=0$;
	
$(2)$ $\sum_{C\in \mathcal{C}}\mathfrak{r}_C|C|=1$ and $S_{G, \mathfrak{r}}$ is a submonoid of $G$.
	
Furthermore, if the above conditions are satisfied, then 
\begin{align*}
\mathcal{M}_{L_K(\Gamma_{G, \mathfrak{r}})}^{fd}\backsimeq \begin{cases}
(\mathcal{M}_K^{fd})^{(G)} & \textnormal{ if } \mathfrak{r}=0\\
(\mathcal{M}_{K[x,x^{-1}]}^{fd})^{(G/S_{G, \mathfrak{r}})} & \textnormal{ otherwise,}
\end{cases}
\end{align*} where $\mathcal{M}_A^{fd}$ is the category of finite dimensional $A$-modules, $\mathcal{N}^{(X)}$ is the $X$-indexed direct sum of copies of the category $\mathcal{N}$, and $\backsimeq$ denotes equivalence of categories.
\end{thm}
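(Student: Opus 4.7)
My strategy is to invoke the Ko\c{c}--\"{O}zaydın criterion: for a row-finite graph $E$, the algebra $L_K(E)$ admits a nonzero finite dimensional representation if and only if $E$ contains a maximal sink or a maximal cycle, each with finitely many predecessors. Since $\mathfrak{r}$ has finite support, Proposition \ref{common}(3) guarantees $\Gamma_{G,\mathfrak{r}}$ is row-finite, so the criterion applies directly.

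For the forward implication, if $\mathfrak{r}=0$ we are in case (1). Otherwise Proposition \ref{common}(5) rules out sinks, so a maximal cycle must exist; Proposition \ref{common}(6) then forces $S_{G,\mathfrak{r}}$ to be a submonoid of $G$. The decisive step is Lemma \ref{sequencecycle}: whenever $\sum_{C\in\mathcal{C}} \mathfrak{r}_C |C| \ge 2$, every cycle $\alpha$ admits a distinct predecessor cycle $\beta$ with $\beta \Rightarrow \alpha$, so no maximal cycle can exist. Hence $\sum_{C\in\mathcal{C}} \mathfrak{r}_C |C| = 1$, placing us in case (2). For the converse, I plan to appeal to Corollary \ref{singlecycle}, which identifies $L_K(\Gamma_{G,\mathfrak{r}})$ with $K^{(G)}$ in case (1) and with $M_{|S_{G,\mathfrak{r}}|}(K[x,x^{-1}])^{(G/S_{G,\mathfrak{r}})}$ in case (2); in the latter case $|S_{G,\mathfrak{r}}|$ is finite, since the unique generator $c \in \bigcup_{C\in\mathrm{supp}(\mathfrak{r})} C$ has finite order by the submonoid hypothesis. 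The factor rings $K$ and $K[x,x^{-1}]$ both obviously admit nonzero finite dimensional representations (the trivial one and evaluation at $1$, respectively), so the full algebras do as well.

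For the category equivalence, I plan to combine two standard facts. First, for any direct sum $\bigoplus_{i\in I} R_i$ of unital $K$-algebras, a finite dimensional module is supported on only finitely many indices (the pairwise orthogonal identities $e_i$ can act nontrivially on at most finitely many components of a finite dimensional vector space), yielding $\mathcal{M}^{fd}_{\bigoplus_{i\in I} R_i} \simeq (\mathcal{M}^{fd}_{R_i})^{(I)}$ when all $R_i$ agree. Second, Morita equivalence gives $\mathcal{M}^{fd}_{M_n(R)} \simeq \mathcal{M}^{fd}_R$, rescaling $K$-dimension by a factor of $n$ and so preserving finite dimensionality. Combining these with Corollary \ref{singlecycle} immediately produces the claimed descriptions $(\mathcal{M}_K^{fd})^{(G)}$ and $(\mathcal{M}_{K[x,x^{-1}]}^{fd})^{(G/S_{G,\mathfrak{r}})}$.

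The main obstacle I anticipate is the support argument for direct sums in the non-unital setting, since $K^{(G)}$ and $M_{|S_{G,\mathfrak{r}}|}(K[x,x^{-1}])^{(G/S_{G,\mathfrak{r}})}$ are not unital when the index set is infinite. One must work with the canonical system of local units given by finite sums of the component identities and verify that a finite dimensional module over such a ring is killed by all but finitely many component identities; this is elementary once made explicit, and delivers the indexed direct-sum decomposition of $\mathcal{M}^{fd}$ needed to close the argument.
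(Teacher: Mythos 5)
Your proposal follows essentially the same route as the paper: the forward direction uses the Ko\c{c}--\"{O}zaydın criterion together with Proposition \ref{common}(5),(6) and Lemma \ref{sequencecycle} to rule out maximal sinks and maximal cycles outside cases (1) and (2), and the converse plus the category equivalences are read off from the explicit descriptions $K^{(G)}$ and $M_{|S_{G,\mathfrak{r}}|}(K[x,x^{-1}])^{(G/S_{G,\mathfrak{r}})}$ of Corollary \ref{singlecycle}. The only difference is that you make explicit the Morita-equivalence and local-unit/support arguments for the direct-sum decomposition of $\mathcal{M}^{fd}$, which the paper leaves implicit; this is a correct and complete rendering of the same proof.
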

\begin{proof} 
$(\Longrightarrow)$. 	Assume that $L_K(\Gamma_{G, \mathfrak{r}})$ has a nonzero finite dimensional representation. By \cite[Theorem 6.5]{ko:rolpa}, $\Gamma_{G, \mathfrak{r}}$ has a maximal sink or cycle with finitely many predecessors. We consider the following cases:

\emph{Case }1: $\sum_{C\in \mathcal{C}}\mathfrak{r}_C|C|\geq 2$. Then,  by Proposition \ref{common} (5) and Lemma \ref{sequencecycle} respectively, $\Gamma_{G, \mathfrak{r}}$ has neither maximal sinks nor maximal cycles, a contradiction.	

\emph{Case }2: $\sum_{C\in \mathcal{C}}\mathfrak{r}_C|C|=1$ and $S_{G, \mathfrak{r}}$ is not a submonoid of $G$. Then,  by Propositions \ref{common} (5) and (6)  respectively, $\Gamma_{G, \mathfrak{r}}$ has neither sinks nor cycles. So, $\Gamma_{G, \mathfrak{r}}$ has neither maximal sinks nor maximal cycles, a contradiction.	

Therefore, in any case, we arrive at a contradiction, thus proving the statement.

$(\Longleftarrow)$. If $\mathfrak{r}=0$, we then have $L_K(\Gamma_{G, \mathfrak{r}}) = K^{(G)}$, by Corollary \ref{singlecycle} (1). This implies that $\mathcal{M}_{L_K(\Gamma_{G, \mathfrak{r}})}^{fd} \backsimeq (\mathcal{M}_K^{fd})^{(G)}$. Consider the case when $\sum_{C\in \mathcal{C}}\mathfrak{r}_C|C|= 1$ and $S_{G, \mathfrak{r}}$ is a submonoid of $G$. By Corollary \ref{singlecycle} (2), we immediately obtain that $S_{G, \mathfrak{r}}$ is a finite subgroup of $G$ and $L_K(\Gamma_{G, \mathfrak{r}}) =(M_{|S_{G, \mathfrak{r}}|}(K[x, x^{-1}]))^{(G/S_{G, \mathfrak{r}})}$, and so, $\mathcal{M}_{L_K(\Gamma_{G, \mathfrak{r}})}^{fd} \backsimeq (\mathcal{M}_{K[x, x^{-1}]}^{fd})^{(G/S_{G, \mathfrak{r}})}$, thus finishing the proof.	
\end{proof}

A unital ring $R$ is said to have \emph{Invariant Basis Number} if,
for any pair of positive integers $m \text{ and } n$, $R^m\cong R^n$ (as right modules) implies that $m=n$. A unital ring $R$ is said to have \emph{Unbounded Generating Number} if for all $m, n \in \mathbb{N}$ and any right $R$-module $K$, $R^n \cong R^m \oplus K$ (as right $R$-modules) implies that $n \geq m$. A straightforward computation immediately establishes that if $R$ has Unbounded Generating Number, then $R$ has Invariant Basis Number.

Criteria for Leavitt path algebras of finite graphs to have Invariant Basis Number have been established in  \cite[Theorem 3.1]{ko:clpaatibnp} and \cite[Theorem 3.5]{np:tsolpaatibnp}. 
In \cite[Theorem 3.16]{anp:lpahugn} Abrams and the authors completely classified finite graphs $E$ for which the Leavitt path algebra $L_K(E)$ of $E$ with coefficients over a field $K$ have Unbounded Generating Number. In \cite[Example 3.19]{anp:lpahugn} Abrams and the authors established that, within the class of Leavitt path algebras, the Invariant Basis Number property is strictly weaker than the Unbounded Generating Number property. 
However, these properties are equivalent to each other within the class of Leavitt path algebras of Cayley graphs (\cite[Corollary 4.3]{np:tsolpaatibnp}). In the following corollary (Corollary \ref{IBNUGN}), we prove that these properties are equivalent to each other for Leavitt path algebras of Hopf graphs. Before doing so, we need some useful notions and facts.

Following \cite{amp:nktfga}, for any directed graph
$E=(E^0, E^1, s, r)$ we define the monoid $M_E$ as follows. We denote by $T$ the free abelian
monoid (written additively) with generators $E^0$ and define relations on $T$ by setting
\begin{center}
	$v = \sum_{e\in s^{-1}(v)}r(e)$
\end{center}
for every regular vertex $v\in E^0$.
Let $\sim_{E}$ be the congruence relation on $T$ generated by these relations.
Then $M_E = T/_{\sim_E}$, and we also denote an element of $M_E$ by $[x]$,
where $x\in T$. 

%
%

Following  \cite[Corollary 3.4]{np:tsolpaatibnp}, the Leavitt path algebra $L_K(E)$ of a finite graph $E$ with coefficients in a field $K$ has Invariant Basis Number if and only if for any pair of positive integers $m$ and $n$,
	\begin{center}
		if $\ m[\sum_{v\in E^0}v] = n[\sum_{v\in E^0}v]$ in $M_E$, then $m = n$.
	\end{center}
\medskip

We end this article with the following fact which extends \cite[Theorem 4.2]{np:tsolpaatibnp} to Hopf graphs.

\begin{cor}\label{IBNUGN}
Let $G$ be a finite group with  a ramification data $\mathfrak{r}=\sum_{C\in \mathcal{C}}\mathfrak{r}_CC$ and $K$ an arbitrary field. Then, the following statements are equivalent:
	
$(1)$ $L_K(\Gamma_{G, \mathfrak{r}})$ has a nonzero finite dimensional module;
	
$(2)$ $L_K(\Gamma_{G, \mathfrak{r}})$ has Unbounded Generating Number;
	
$(3)$ $L_K(\Gamma_{G, \mathfrak{r}})$ has Invariant Basis Number;
	
$(4)$ $\sum_{C\in \mathcal{C}}\mathfrak{r}_C|C|\leq 1$.
\end{cor}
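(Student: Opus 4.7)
The plan is to verify the chain of implications $(4) \Rightarrow (1) \Rightarrow (2) \Rightarrow (3) \Rightarrow (4)$. Because $G$ is finite, $\mathfrak{r}$ automatically has finite support and, for any $\mathfrak{r} \neq 0$, the subsemigroup $S_{G,\mathfrak{r}}$ is a finite subsemigroup of $G$, hence a subgroup and in particular a submonoid. Consequently the two possible cases in Theorem \ref{fdm} together collapse to the single condition $\sum_{C}\mathfrak{r}_C|C|\leq 1$, giving $(1) \Leftrightarrow (4)$.

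For $(1) \Rightarrow (2)$, let $V$ be a nonzero finite dimensional right module over $R:=L_K(\Gamma_{G,\mathfrak{r}})$ of $K$-dimension $d\geq 1$. Any right $R$-module isomorphism $R^n \cong R^m \oplus N$ gives, after applying $-\otimes_R V$, a $K$-linear isomorphism $V^n \cong V^m \oplus (N\otimes_R V)$, whence $nd \geq md$ and $n\geq m$. Thus $R$ has Unbounded Generating Number. The implication $(2)\Rightarrow (3)$ is the standard observation already recorded in the paragraph preceding the corollary.

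The main step is $(3) \Rightarrow (4)$, argued by contraposition. Assume $\sum_{C}\mathfrak{r}_C|C|\geq 2$; then $\mathfrak{r}\neq 0$, the set $S_{G,\mathfrak{r}}$ is a finite subgroup of $G$ and a submonoid, and $\Delta_{G,\mathfrak{r}}^0 = S_{G,\mathfrak{r}}$. Theorem \ref{notconnected}(1) yields a finite ring direct sum
\[L_K(\Gamma_{G,\mathfrak{r}}) \cong L_K(\Delta_{G,\mathfrak{r}})^{|G/\Delta_{G,\mathfrak{r}}^0|}.\]
The subgraph $\Delta_{G,\mathfrak{r}}$ is finite (so $L_K(\Delta_{G,\mathfrak{r}})$ is unital), strongly connected by Theorem \ref{notconnected}(3), every vertex emits $\sum_{C}\mathfrak{r}_C|C|\geq 2$ edges by Proposition \ref{common}(2) (so every cycle has an exit), and it contains a cycle by Proposition \ref{common}(6). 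Strong connectedness also forces the only hereditary and saturated subsets of $\Delta_{G,\mathfrak{r}}^0$ to be $\varnothing$ and $\Delta_{G,\mathfrak{r}}^0$. Theorem \ref{pis-lpas} thus gives that $L_K(\Delta_{G,\mathfrak{r}})$ is unital purely infinite simple.

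A unital purely infinite simple ring $S$ satisfies $S \cong S\oplus S$ as right $S$-modules (its identity is a properly infinite idempotent), so in particular $S$ fails Invariant Basis Number. Since right modules over a finite ring direct product decompose coordinatewise, $R^n \cong R^m$ as right $R$-modules if and only if $L_K(\Delta_{G,\mathfrak{r}})^n \cong L_K(\Delta_{G,\mathfrak{r}})^m$ as right $L_K(\Delta_{G,\mathfrak{r}})$-modules; therefore $L_K(\Gamma_{G,\mathfrak{r}})$ itself fails Invariant Basis Number, completing the contrapositive. The only delicate point is the verification that $\Delta_{G,\mathfrak{r}}$ fulfills every hypothesis of Theorem \ref{pis-lpas}, but this is immediate from the combinatorial results collected in Section~2.
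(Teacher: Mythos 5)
Most of your cycle of implications is sound, and in places you improve on the paper: your proof of $(4)\Leftrightarrow(1)$ via Theorem \ref{fdm} plus the observation that finiteness of $G$ makes $S_{G,\mathfrak{r}}$ a subgroup is exactly the paper's argument for $(4)\Rightarrow(1)$, and your direct dimension count for $(1)\Rightarrow(2)$ (better run with $\operatorname{Hom}_R(-,V)$ than with $-\otimes_R V$, since $V$ is a right module) is a self-contained replacement for the paper's appeal to \cite{anp:lpahugn} and \cite{ko:rolpa}. The reduction of IBN for $L_K(\Gamma_{G,\mathfrak{r}})\cong L_K(\Delta_{G,\mathfrak{r}})^{|G/\Delta^0_{G,\mathfrak{r}}|}$ to IBN for the single factor is also fine.

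The genuine gap is the last step of $(3)\Rightarrow(4)$. You correctly verify that $S:=L_K(\Delta_{G,\mathfrak{r}})$ is unital purely infinite simple, but the principle you then invoke --- that a unital purely infinite simple ring satisfies $S\cong S\oplus S$ and hence fails IBN --- is false. Proper infiniteness of $1_S$ only gives $S\oplus S\oplus P\cong S$ for some projective $P$, not $S\cong S\oplus S$; for instance $L_K(1,3)$ is unital purely infinite simple with module type $(1,3)$, so $L_K(1,3)\not\cong L_K(1,3)^2$. Worse, pure infinite simplicity does not even imply failure of IBN: by \cite{agp:k0opisr}, $V(S)\setminus\{0\}$ is the group $K_0(S)$, so $S^n\cong S^m$ for some $n\neq m$ precisely when $[1_S]$ is a torsion element of $K_0(S)$, and there are finite graphs $E$ with $L_K(E)$ unital purely infinite simple and $[1]$ of infinite order in $K_0(L_K(E))\cong\operatorname{coker}(I-A_E^t)$ --- e.g.\ the two-vertex graph with adjacency matrix $\left(\begin{smallmatrix}3&1\\2&2\end{smallmatrix}\right)$ has $K_0\cong\mathbb{Z}$ with $[1]\mapsto -1$, hence its Leavitt path algebra is purely infinite simple \emph{and} has IBN. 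So pure infinite simplicity cannot carry this step; what is missing is the specific combinatorial input the paper uses, namely the in-degree regularity of Hopf graphs (Proposition \ref{common}(2)): every vertex receives exactly $m=\sum_{C}\mathfrak{r}_C|C|$ edges, so summing the defining relations of the graph monoid over all vertices yields $[\sum_{g\in G}g]=m[\sum_{g\in G}g]$ in $M_{\Gamma_{G,\mathfrak{r}}}$, and since $m\geq 2$, \cite[Corollary 3.4]{np:tsolpaatibnp} gives failure of IBN. You need this computation (or an equivalent verification that $[1_S]$ is torsion in $K_0(S)$) to close your cycle of implications.
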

\begin{proof}
(1) $\Longleftrightarrow$ (2). It follows from \cite[Theorem 3.16]{anp:lpahugn} and \cite[Theorem 6.5]{ko:rolpa}.
	
(2) $\Longrightarrow$ (3). It is obvious.
	
(3) $\Longrightarrow$ (4). Assume that $L_K(\Gamma_{G, \mathfrak{r}})$ has Invariant Basis Number, and $m:=\sum_{C\in \mathcal{C}}\mathfrak{r}_C|C|\geq 2$. Write 
\begin{center}
$\text{supp}(\mathfrak{r}) = \{C_1, C_2, \cdots, C_k\}$ and $C_i = \{c_{i1}, c_{i2}, \cdots, c_{in_i}\}$.		
\end{center}
We then have 
$G = Gc_{ij}$ for all  $1\le i\le k$ and $1\le j\le n_i$ (since $G$ is a finite group) and every vertex $g$ in $\Gamma_{G, \mathfrak{r}}$ emits exactly $\mathfrak{r}_{C_i}$ edges to the vertex $gc_{ij}$ for all $1\le i\le k$ and $1\le j \le n_i$, and $g$ emits only to these vertices $gc_{ij}$. This implies that 
\begin{center}
$[g] = \sum^{n_1}_{j=1}\mathfrak{r}_{C_1}[gc_{1j}] + \sum^{n_2}_{j=1}\mathfrak{r}_{C_2}[gc_{2j}]  + \cdots + \sum^{n_k}_{j=1}\mathfrak{r}_{C_k}[gc_{kj}]$ in $M_{\Gamma_{G, \mathfrak{r}}}$
\end{center}
and 
\begin{center}
	$[\sum_{g\in G}g] =  \sum^{n_1}_{j=1}\mathfrak{r}_{C_1}[\sum_{g\in G}gc_{1j}]  + \cdots + \sum^{n_k}_{j=1}\mathfrak{r}_{C_k}[\sum_{g\in G}gc_{kj}]$ in $M_{\Gamma_{G, \mathfrak{r}}}.$
\end{center}
Since $G = Gc_{ij}$ for all $1\le i\le k$ and $1\le j\le n_i$, we obtain that 
\begin{center}
$\sum_{g\in G}g = \sum_{g\in G}gc_{ij}$ for all $1\le i\le k$ and $1\le j\le n_i,$ 
\end{center} 
so 

\begin{center}
$[\sum_{g\in G}g] = \sum^k_{i=1}n_i\mathfrak{r}_{C_i}[\sum_{g\in G}g] = m[\sum_{g\in G}g]$ in $M_{\Gamma_{G, \mathfrak{r}}},$
\end{center}
showing that  $L_K(\Gamma_{G, \mathfrak{r}})$ has no Invariant Basis Number by  \cite[Corollary 3.4]{np:tsolpaatibnp}, a contradiction. Therefore, we must have $\sum_{C\in \mathcal{C}}\mathfrak{r}_C|C|\leq 1$.
	
(4) $\Longrightarrow$ (1). It follows from Theorem \ref{fdm} and the fact that if $G$ is a finite group, then  $S_{G, \mathfrak{r}}$ is a subgroup of $G$, thus finishing the proof.
\end{proof}

\vskip 0.5 cm \vskip 0.5cm {

\end{document}